\newtheorem{theorem}{Theorem}
\newtheorem{lemma}{Lemma}
\newtheorem{corollary}{Corollary}
\newtheorem{property}{Property}
\title{Edge Partitions of Optimal\\ $2$-plane and $3$-plane Graphs
\thanks{%
An extended abstract of the research presented in this paper has been accepted at the $44$th International Workshop on Graph-Theoretic Concepts in Computer Science (WG 2018).}
\thanks{%
Research funded in part by the project: ``Algoritmi e sistemi di analisi visuale di reti complesse e di grandi
dimensioni" - Ricerca di Base 2017, Dip. Ing. Univ. Perugia.}
}
\author{
Michael Bekos$^1$,
Emilio Di Giacomo$^2$,
Walter Didimo$^2$,\\
Giuseppe Liotta$^2$,
Fabrizio Montecchiani$^2$, 
Chrysanthi Raftopoulou$^3$
\\[0.1in]
$^1$Institut f{\"u}r Informatik, Universit{\"a}t T{\"u}bingen, T{\"u}bingen, Germany 
\\\texttt{\small bekos@informatik.uni-tuebingen.de} 
\and
$^2$Dip. Ingegneria, Universit\`a degli Studi di Perugia, Perugia, Italy 
\\\texttt{\small\{emilio.digiacomo,walter.didimo,giuseppe.liotta,fabrizio.montecchiani\}@unipg.it} 
\and
$^3$Dep. Mathematics, National Technical University of Athens, Athens, Greece
\\\texttt{\small crisraft@mail.ntua.gr}
}
\begin{document}

\maketitle

\begin{abstract}
A topological graph is a graph drawn in the plane. 
A topological graph is $k$-plane, $k>0$, if each edge is crossed at most $k$ times. 
We study the problem of partitioning the edges of a $k$-plane graph such that each partite set forms a graph with a simpler structure. While this problem has been studied for $k=1$, we focus on \emph{optimal} $2$-plane and $3$-plane graphs, which are $2$-plane and $3$-plane graphs with maximum~density. We prove the following results. (i)~It is not possible to partition the edges of a simple optimal $2$-plane graph into a $1$-plane graph and a forest, while (ii)~an edge partition formed by a $1$-plane graph and two plane forests always exists and can be computed in linear time. (iii)~We describe efficient algorithms to partition the edges of a simple optimal $2$-plane graph into a $1$-plane graph and a plane graph with maximum vertex degree $12$, or with maximum vertex degree $8$ if the optimal $2$-plane graph is such that its crossing-free edges form a graph with no separating triangles. (iv)~We exhibit an infinite family of simple optimal $2$-plane graphs such that in any edge partition composed of a $1$-plane  graph  and a plane graph,  the plane graph has maximum vertex degree at least $6$ and the $1$-plane graph has maximum vertex degree at least $12$. (v)~We show that every optimal $3$-plane graph whose crossing-free edges form a biconnected graph can be decomposed, in linear time, into a $2$-plane graph and two plane forests.
\end{abstract}

\section{Introduction}\label{se:intro}

Partitioning the edges of a graph such that each partite set induces a subgraph with a simpler structure is a fundamental problem in graph theory with various applications, including the design of graph drawing algorithms. For example, a classic result by Schnyder~\cite{DBLP:conf/soda/Schnyder90} states that the edge set of any maximal planar graph can be partitioned into three trees, which can be used to efficiently compute planar straight-line drawings on a grid of quadratic size. Edge partitions of planar graphs have also been studied by Gon{\c{c}}alves~\cite{DBLP:conf/stoc/Goncalves05}, who proved that the edges of every planar graph can be partitioned into two outerplanar graphs, thus solving a conjecture by Chartrand et al.~\cite{Chartrand197112}, and improving previous results by  Elmallah~\cite{ec-pepg+-88}, Kedlaya~\cite{Kedlaya1996238}, and Ding et al.~\cite{Ding2000221}.
More in general, there exist various graph parameters based on edge partitions. 
For example, the \emph{arboricity} of a graph $G$ is the minimum number of forests needed to cover all edges of $G$, while $G$ has \emph{thickness} $t$ if it is the union of $t$ planar graphs. Durocher and Mondal~\cite{DBLP:conf/icalp/DurocherM16} studied the interplay between the thickness $t$ of a graph and the number of bends per edge in a drawing that can be partitioned into~$t$~planar~sub-drawings.

Recently, edge partitions have been studied for the family of \emph{$1$-planar graphs}. A graph is \emph{$k$-planar} ($k \geq 1$)  if it can be drawn in the plane such that each edge is crossed at most $k$ times~\cite{DBLP:journals/combinatorica/PachT97}; a topological graph is \emph{$k$-plane} if it has at most $k$ crossings per edge. The $k$-planar graphs represent a natural extension of planar graphs, and fall within the more general framework of~\emph{beyond planarity}. Beyond planarity studies graph families that admit drawings where some prescribed edge crossing configurations are forbidden~\cite{JGAA-459,DBLP:journals/corr/abs-1804-07257,hong_et_al,shonan}.  
Ackerman~\cite{DBLP:journals/dam/Ackerman14} proved that the edges of a $1$-plane graph can be partitioned into a plane graph (a topological graph with no crossings) and a plane forest, extending an earlier result by Czap and Hud\'ack~\cite{DBLP:journals/combinatorics/CzapH13}. A $1$-planar graph with $n$ vertices is \emph{optimal} if it contains exactly $4n-8$ edges, which attains the maximum density for $1$-planar graphs. Lenhart et al.~\cite{DBLP:journals/tcs/LenhartLM17} proved that every optimal $1$-plane graph can be partitioned into two plane graphs such that one has maximum vertex degree four, where the bound on the vertex degree is worst-case optimal. Di Giacomo et al.~\cite{algo18} proved that every triconnected (not necessarily optimal) $1$-plane graph can be partitioned into two plane graphs such that one has maximum vertex degree six, which is also a tight bound. This result was exploited to show that every such graph admits a visibility representation in which the vertices are orthogonal polygons with few reflex corners each, while the edges are horizontal and vertical lines of sight between vertices~\cite{algo18}. Additional results on edge partitions of various subclasses of $1$-plane graphs are~reported~in~\cite{DBLP:journals/tcs/GiacomoDELMMW18}.

\begin{figure}[t]
\centering
\subfigure[]{\includegraphics[width=0.47\columnwidth,page=7]{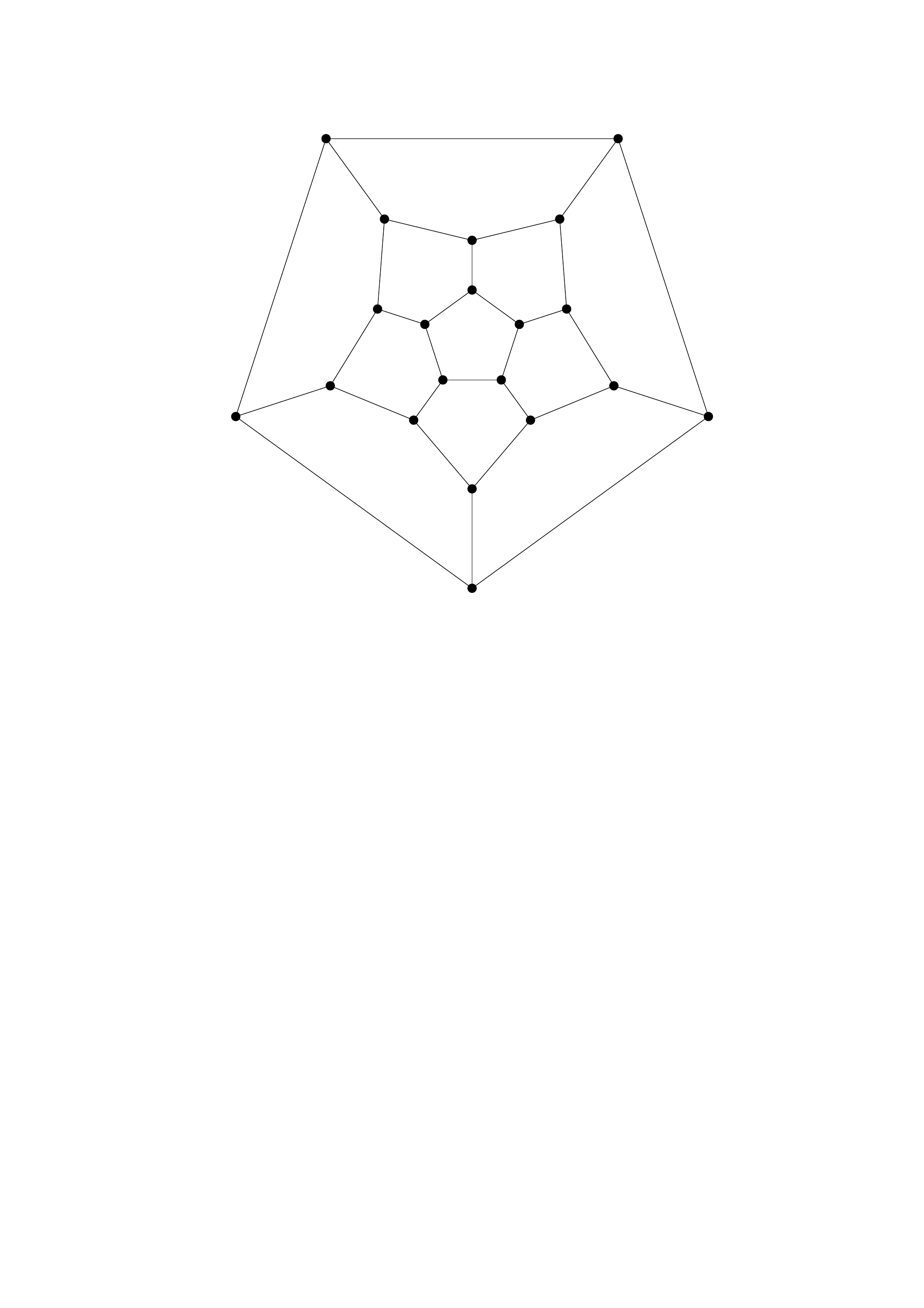}\label{fi:intro-2}}\hfil
\subfigure[]{\includegraphics[width=0.47\columnwidth,page=4]{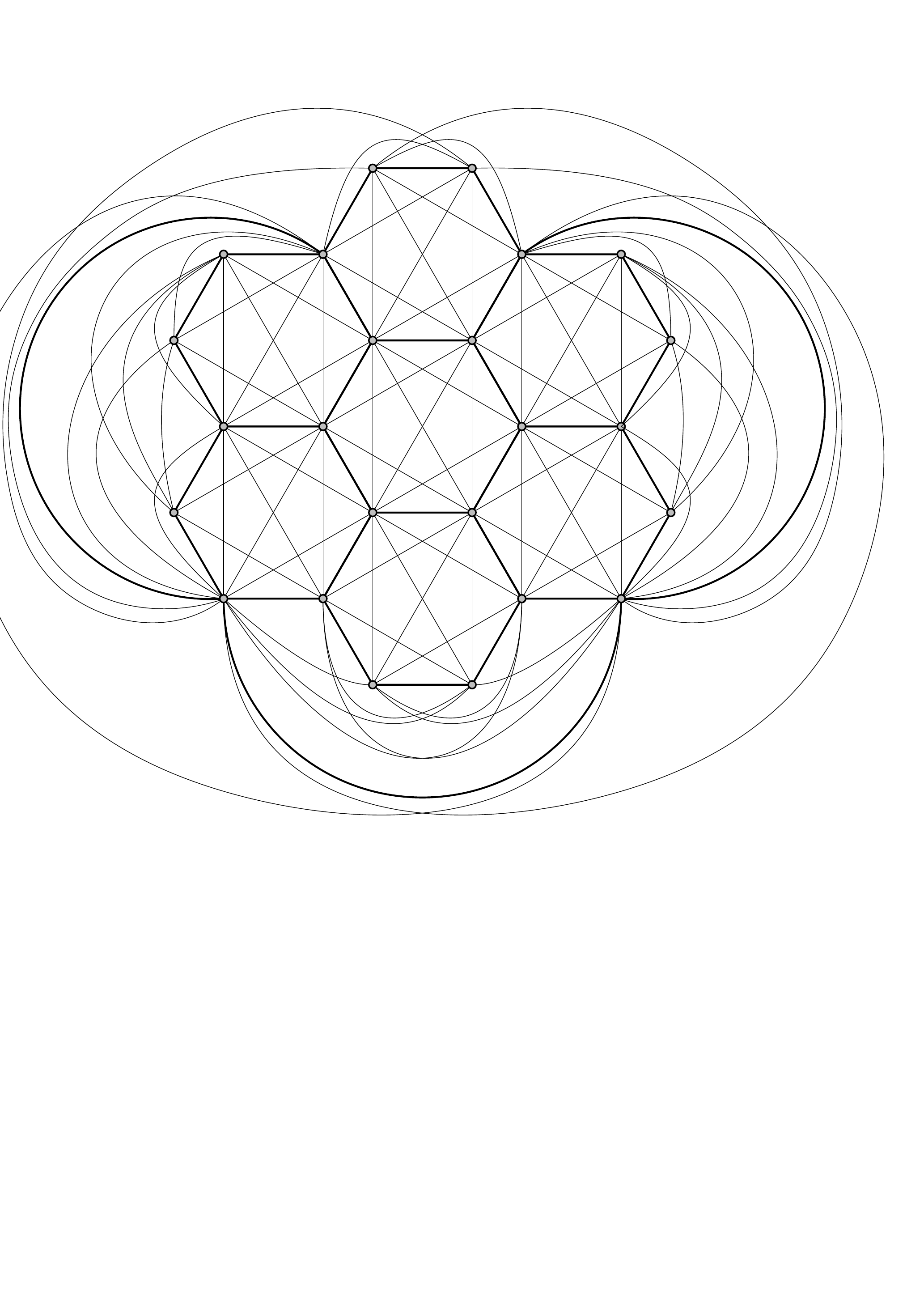}\label{fi:intro-4}}
\caption{An edge partition of: (a)  an optimal $2$-plane graph into a $1$-plane graph (solid) and two plane forests (dashed and dotted);  (b) an optimal $3$-plane graph into a $2$-plane graph (solid) and two plane forests (dashed and dotted).
}
\end{figure}

While $1$-planar graphs have been extensively studied (for a recent survey refer to~\cite{DBLP:journals/csr/KobourovLM17}), and their structure has been deeply understood, this is not the case for $2$-planar and $3$-planar graphs. These graphs have at most $5n-10$ edges and $5.5n-11$ edges~\cite{DBLP:journals/combinatorica/PachT97}, respectively, and their structure is more complex. Similarly to $1$-planar graphs, a $2$-planar (respectively, $3$-planar) graph with $n$ vertices is \emph{optimal} if it contains exactly $5n-10$ (respectively, $5.5n-11$) edges; see also Section~\ref{se:preliminaries} for formal definitions. Examples of optimal $2$-plane and optimal $3$-plane graphs are shown in Figs.~\ref{fi:intro-2} and~\ref{fi:intro-4}, respectively. Bekos et al.~\cite{DBLP:conf/compgeom/Bekos0R17} recently characterized optimal $2$-planar and optimal $3$-planar graphs, and showed that these graphs have a regular structure; refer to Section~\ref{se:preliminaries} for details. In this paper, we build upon this characterization and we initiate the study of edge partitions of simple (i.e., with neither self-loops nor parallel edges) optimal $2$-plane graphs. We then extend some of our results to a subclass of optimal $3$-plane graphs. More precisely, our contributions are as follows.

\begin{itemize}
\item We prove that it is not possible to partition the edges of a simple optimal $2$-plane graph $G$ into a $1$-plane graph and a forest (see Theorem~\ref{thm:no1planeforest}). 
Note that, by Nash-Williams formula~\cite{JLMS:JLMS0445}, $2$-planar graphs have arboricity at most five, while $1$-planar graphs have arboricity at most four. Hence, our result implies that in a decomposition of $G$ into five forests, it is not possible to pick four of them forming a $1$-plane graph. 

\item On the positive side, every optimal $2$-plane graph, whose crossing-free edges form a biconnected graph, can be partitioned into a $1$-plane graph and two plane forests. This implies that every simple optimal $2$-plane graph admits such an edge partition (see Theorem~\ref{thm:orientation}). This result exploits some insights in the structure of optimal $2$-plane graphs; also, the edge partition can be computed in linear time. For an example, refer to Fig.~\ref{fi:intro-2}. 

\item Additionally, we prove that the edges of a simple optimal $2$-plane graph can always be partitioned into a $1$-plane graph and a plane graph with maximum vertex degree $12$ (see Theorem~\ref{thm:upper_bound}). The upper bound on the vertex degree can be lowered to $8$ if the crossing-free edges of the optimal $2$-plane graph form a graph with no separating triangles (see Theorem~\ref{thm:upper-bound-2}). Both bounds are achieved with constructive techniques in polynomial time.
 
\item Besides the above upper bound on the vertex degree, we establish a non-trivial lower bound. Specifically, we exhibit an infinite family of simple optimal $2$-plane graphs such that in any edge partition composed of a $1$-plane graph and a plane graph, the plane graph has maximum vertex degree at least $6$ and the $1$-plane graph has maximum vertex degree at least $12$ (see Theorem~\ref{thm:lower_bound}). 

\item We finally consider (non-simple) optimal $3$-plane graphs and  prove that any such a topological graph, whose crossing-free edges form a biconnected graph, can be partitioned into a $2$-plane graph and two plane forests; also in this case, the edge partition can be computed in linear time (see Theorem~\ref{thm:orientation-3planar}). For an example, refer to Fig.~\ref{fi:intro-4}.
\end{itemize}
 
Section~\ref{se:preliminaries} contains preliminaries and notation. In Section~\ref{se:2planar}, we present our results for optimal $2$-plane graphs, while in Section~\ref{se:3planar} we extend them to optimal $3$-plane graphs. A concluding discussion followed by a set of interesting open problems can be found in Section~\ref{se:open}. 

\section{Preliminaries and Notation}\label{se:preliminaries}
\noindent\textbf{Drawings and planarity.} A graph is \emph{simple} if it contains neither  self-loops nor  parallel edges. A \emph{drawing} of a graph $G=(V,E)$ is a mapping of the vertices of $V$ to points of the plane, and of the edges of $E$ to Jordan arcs connecting their corresponding endpoints but not passing through any other vertex. We only consider \emph{simple} drawings, i.e., drawings such that two arcs representing two edges have at most one point in common, and this point is either a common endpoint or a common interior point where the two arcs properly cross each other. A graph drawn in the plane is also called a \emph{topological graph}. The \emph{crossing graph} $C(G)$ of a topological graph $G$ has a vertex for each edge of $G$ and an edge between two vertices if and only if the two corresponding edges of $G$ cross each other. A topological graph is \emph{plane} if it has no edge crossings.  A plane graph subdivides the plane into topologically connected regions, called \emph{faces}. The unbounded region is the \emph{outerface}. The \emph{length} of a face $f$ is the number of vertices encountered in a closed walk along the boundary of $f$. If a vertex $v$ is encountered $k>0$ times, then $v$ has \emph{multiplicity} $k$ in $f$. In a biconnected graph, all vertices have multiplicity one in the faces they belong to.

\begin{figure}[t]
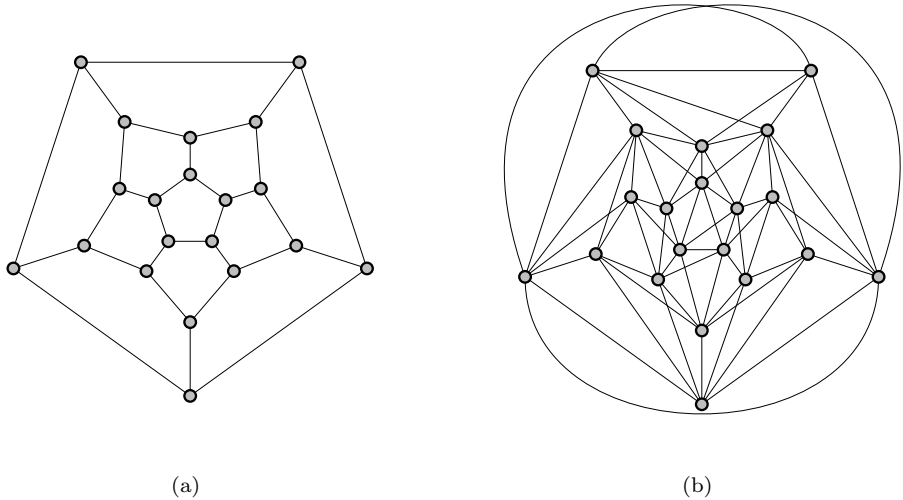

\centering
\subfigure[]{\includegraphics[width=0.45\columnwidth,page=5]{figures/example}\label{fi:prel-1}}
\subfigure[]{\includegraphics[width=0.45\columnwidth,page=8]{figures/example}\label{fi:prel-2}}
\caption{(a) The pentangulation of the graph $G$ in Fig.~\ref{fi:intro-2}. (b) A $1$-plane graph obtained from $G$ by removing two adjacent chords from each filled pentagon.}
\end{figure}

\smallskip\noindent\textbf{$k$-planar graphs.} A topological graph is \emph{$k$-plane} if each edge is crossed at most $k$ times.  A \emph{pentangulation} $P$ (respectively, \emph{hexangulation} $H$) is a plane graph such that all its faces are $5$-cycles (respectively, $6$-cycles), which we call \emph{pentagons} (respectively, \emph{hexagons}). Two parallel edges are \emph{homotopic} if either the interior or the exterior region bounded by their curves contains no vertices. A self loop is \emph{homotopic} if the interior or the exterior region bounded by its curve contains no vertices. Bekos et al.~\cite{DBLP:conf/compgeom/Bekos0R17} proved that an $n$-vertex graph $G$ is \emph{optimal $2$-planar} if and only if it admits a drawing without homotopic self-loops and parallel edges, such that the graph formed by the crossing-free edges is a pentangulation $P(G)$ with $n$ vertices, and each face of $P(G)$ has five crossing edges in its interior, which we call \emph{chords} in the following. Also, each chord has exactly two crossings. A pentagon with its five chords routed as described above will be called a \emph{filled pentagon}. Fig.~\ref{fi:prel-1} shows the pentangulation $P(G)$ of the optimal $2$-plane graph $G$ of Fig.~\ref{fi:intro-2}. Similarly, Bekos et al.\ proved that an $n$-vertex graph $G$ is optimal $3$-planar if and only if it admits a drawing  without homotopic self-loops and homotopic parallel edges, such that the graph formed by the crossing-free edges is a hexangulation $H(G)$ with $n$ vertices, and each face of $H(G)$ has eight crossing edges in its interior, which we also call chords. Accordingly, a hexagon with its eight chords routed as described above will be called a \emph{filled hexagon}.

\smallskip\noindent\textbf{Arboricity and orientations.} The \emph{arboricity} of a graph is the minimum number of forests into which its edges can be partitioned. Nash-Williams~\cite{JLMS:JLMS0445} proved that a graph $G$ has arboricity $a \ge 1$ if and only if, $a = \max \{\lceil \frac{m_S}{n_S-1} \rceil\}$ over all subgraphs $S$ of $G$ with $n_S \ge 2$ vertices and $m_S$ edges. A \emph{$d$-orientation} of a graph $G$ is an orientation of the edges of $G$ such that each vertex has at most $d$ outgoing edges, for some integer $d \ge 1$. Note that if a graph has arboricity $a$, then it admits an $a$-orientation (while the converse may not be true). 
Given two vertices $s$ and $t$ of a graph $G$, an \emph{$st$-orientation} of $G$ is an orientation of its edges such that $G$ becomes a directed acyclic graph with a single source $s$ and a single~sink~$t$. 

\smallskip\noindent\textbf{Edge partitions.} Given a topological graph $G=(V,E)$, an \emph{edge partition} of $G$ is denoted by $\langle E_1, \dots, E_p \rangle$, for some $p>1$, where $E = E_1 \cup \dots \cup E_p$ and $E_i \cap E_j = \emptyset$ ($1 \le i \neq j \le p$). We denote by $G[E_i]$ the topological graph obtained from $G$ by removing all edges not in $E_i$ and all isolated vertices.
	
\section{Edge Partitions of Optimal $2$-plane Graphs}\label{se:2planar}

We begin with the following property, which will be useful in the remainder of this section.

\begin{property}\label{pr:chords}
	Let $G'=(V,E \setminus R)$ be a topological graph obtained by removing a subset $R$ of crossing edges from a simple optimal $2$-plane graph $G=(V,E)$. Graph $G'$ is $1$-plane if and only if $R$ has (at least) two adjacent chords for each filled pentagon of $G$.  	
\end{property}
\begin{proof}
The crossing graph $C(G)$ of $G$ contains a cycle of length $5$ for each filled pentagon of $G$, where adjacent chords of $G$ correspond to vertices of $C(G)$ that belong to the same cycle and are at distance two. Notice that $G'$ is $1$-plane if and only if its crossing graph $C(G')$ has maximum vertex degree one  and thus if and only if the edges in $R$ correspond to a dominating set of $C(G)$. Since a dominating set of $C(G)$ has at least two non adjacent vertices from each cycle, $R$ contains at least two adjacent chords for each filled pentagon of $G$.
\end{proof}

\noindent For example, the graph in Fig.~\ref{fi:prel-2} is obtained by removing two adjacent chords from each filled pentagon of an optimal $2$-plane graph.

\subsection{Edge partitions with acyclic subgraphs}\label{sse:acyclic}

As already mentioned, the edge set of a $1$-plane graph can always be partitioned into a plane graph and a plane forest~\cite{DBLP:journals/dam/Ackerman14}. One may wonder whether this result can be generalized to $2$-plane graphs, that is, whether the edge set of every $2$-plane graph can be partitioned into a $1$-plane graph and a forest. Theorem~\ref{thm:no1planeforest} shows that this may not be always possible. In particular, this is never the case for optimal $2$-plane graphs. On the positive side, Theorem~\ref{thm:orientation} gives a constructive technique to partition the edges of every optimal $2$-plane graph into a $1$-plane graph and two plane forests (rather than one).

\begin{theorem}\label{thm:no1planeforest}
Let $G$ be a simple optimal $2$-plane graph. 
Graph $G$ has no edge partition $\langle E_1, E_2 \rangle$ such that $G[E_1]$ is a $1$-plane graph and $G[E_2]$ is a forest.
\end{theorem}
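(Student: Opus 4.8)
The plan is to obtain a contradiction from a short counting argument that balances, on one side, the number of chords that must necessarily be assigned to $E_2$ in order for $G[E_1]$ to be $1$-plane, and on the other side, the elementary bound $|E_2|\le n-1$ coming from the fact that $G[E_2]$ is a forest on at most $n$ vertices.

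First I would recall the structure of $G$ given by the characterization of Bekos et al.\ stated in Section~\ref{se:preliminaries}: the crossing-free edges of $G$ form a pentangulation $P(G)$ on the $n$ vertices of $G$, and each face of $P(G)$ is a filled pentagon. Let $f$ denote the number of faces of $P(G)$. Since every face is a $5$-cycle and every edge of $P(G)$ lies on exactly two faces, double counting gives $2|E(P(G))|=5f$; combining this with Euler's formula $n-|E(P(G))|+f=2$ yields $f=\tfrac{2(n-2)}{3}$. I would also record here that simplicity of $G$ forces $n\ge 8$: a pentangulation has $n\equiv 2\pmod 3$, and for the only smaller value, $n=5$, one has $P(G)=C_5$ drawn with two pentagonal faces, so filling both pentagons produces parallel chords, contradicting simplicity. (This base case deserves a careful check, because for $n=5$ the statement of the theorem is in fact false.)

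Now suppose, for a contradiction, that an edge partition $\langle E_1,E_2\rangle$ exists with $G[E_1]$ $1$-plane and $G[E_2]$ a forest. Let $R$ be the set of chords of $G$ that belong to $E_2$, so that $E\setminus R$ consists of $E_1$ together with the crossing-free edges in $E_2$; since crossing-free edges create no crossings, the topological graph $(V,E\setminus R)$ is $1$-plane. Hence Property~\ref{pr:chords} applies to $R$ and shows that $R$ contains at least two chords from every filled pentagon of $G$. As each chord lies in the interior of exactly one face of $P(G)$, these contributions are disjoint over the $f$ filled pentagons, so $|E_2|\ge|R|\ge 2f=\tfrac{4(n-2)}{3}$. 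On the other hand, $G[E_2]$ is a forest on at most $n$ vertices, whence $|E_2|\le n-1$. Combining the two inequalities gives $\tfrac{4(n-2)}{3}\le n-1$, that is $n\le 5$, contradicting $n\ge 8$ and completing the proof.

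I do not expect a genuine structural obstacle here; the delicate points are purely in the bookkeeping. One must verify the face count $f=\tfrac{2(n-2)}{3}$ via Euler's formula, check that the presence of crossing-free edges in $E_2$ does not weaken the bound $|E_2|\ge 2f$ (so that Property~\ref{pr:chords} is legitimately invoked), and, most importantly, confirm that every \emph{simple} optimal $2$-plane graph has $n\ge 8$. This last point is the real crux, since the simplicity hypothesis is used in an essential way: a (non-simple) optimal $2$-plane graph on $n=5$ vertices does admit a partition into a $1$-plane graph and a forest, so the bound is tight with respect to it.
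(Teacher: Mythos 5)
Your proposal is correct and follows essentially the same route as the paper's proof: invoke Property~\ref{pr:chords} to force at least two chords per filled pentagon into $E_2$, count the $\frac{2}{3}(n-2)$ pentagons via Euler's formula, compare with the forest bound $|E_2|\le n-1$ to get $n\le 5$, and rule out $n\le 5$ using simplicity. Your extra care in applying Property~\ref{pr:chords} only to the chords in $E_2$ and in checking the $n=5$ base case is sound but does not change the argument.
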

\begin{proof}
Consider an edge partition $\langle E_1, E_2 \rangle$ such that $G[E_1]$ is a $1$-plane graph. 
By Property~\ref{pr:chords}, $E_2$ contains at least two chords from each filled pentagon of $G$. 
By Euler's formula, if $G$ has $n$ vertices, then the pentangulation $P(G)$ has $\frac{2}{3}(n-2)$ faces, and thus $G$ has  $\frac{2}{3}(n-2)$ filled pentagons. 
It follows that $E_2$ contains at least $2 \times \frac{2}{3}(n-2)$ edges, and hence $G[E_2]$ can be a forest only if $n\le 5$. 
However, since $G$ is simple, $n>5$ holds, as otherwise $P(G)$ would be a $5$-cycle and $G$ would have five pairs of parallel chords.
\end{proof}

In order to prove Theorem~\ref{thm:orientation}, we first prove an analogous result for a wider family of optimal $2$-plane graphs, and then show that this family contains all simple optimal $2$-plane graphs.

\begin{lemma}\label{le:orientation}
Every $n$-vertex optimal $2$-plane graph $G=(V,E)$ whose pentangulation $P(G)$ is biconnected has an edge partition $\langle E_1, E_2, E_3 \rangle$, which can be computed in $O(n)$ time, such that $G[E_1]$ is a $1$-plane graph and both $G[E_2]$ and $G[E_3]$ are plane forests.
\end{lemma}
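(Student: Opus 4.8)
The plan is as follows. By Property~\ref{pr:chords}, if $G[E_1]$ is to be $1$-plane then $E_2\cup E_3$ must contain two adjacent chords of every filled pentagon of $G$; conversely, moving two adjacent chords of each filled pentagon out of $E_1$ makes $G[E_1]$ $1$-plane. The two chords of such an adjacent pair share an endpoint — call it the \emph{apex} of that pentagon — and in particular do not cross; since chords of distinct pentagons never cross either, whatever two adjacent chords we pick per pentagon their union $R$ is a plane graph. So it suffices to choose, for every filled pentagon, an apex (equivalently, a pair of adjacent chords) and to distribute the two apex-chords of each pentagon between $E_2$ and $E_3$ so that $G[E_2]$ and $G[E_3]$ are forests; then $E_1:=E\setminus(E_2\cup E_3)$ works.

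First I would use the hypothesis that $P(G)$ is biconnected: it admits a bipolar ($st$-) orientation, computable in $O(n)$ time, which I fix, and I let $\pi$ be a corresponding topological numbering of $V$. In a bipolar orientation of a plane graph the boundary of every face $f$ is the union of two directed paths from a single source $s_f$ to a single sink $t_f$, and $\pi$ increases along both paths; applied to the pentagonal faces of $P(G)$ this singles out for each pentagon $P$ its source $s_P$, which is the $\pi$-minimum of the five vertices of $P$. Next, call a vertex $v$ of a pentagon $P$ an \emph{admissible apex} if the two non-neighbours of $v$ along $P$ (those joined to $v$ by a chord of $P$) both have larger $\pi$ than $v$. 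A short case analysis, distinguishing whether the two boundary paths of $P$ have lengths $\{1,4\}$ or $\{2,3\}$, shows that every pentagon has exactly two admissible apices: one is $s_P$, and the other, say $w_P$, is a neighbour of $s_P$ along $P$ with $\pi(s_P)<\pi(w_P)$.

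The technical heart is then to pick one admissible apex $a(P)\in\{s_P,w_P\}$ per pentagon so that the chosen apices are pairwise distinct. I would set this up as a bipartite matching between pentagons and vertices (each pentagon adjacent to its two admissible apices) and verify Hall's condition; the structural facts that should make the count go through are that each pair $\{s_P,w_P\}$ lies along an edge of $P(G)$, that every edge of $P(G)$ bounds only two faces, and that the pentagons with a fixed source $v$ form a ``fan'' around $v$ so their $w_P$'s can be spread over the distinct neighbours of $v$ — together with Euler's formula applied to the subpentangulation spanned by any family of pentagons. Once distinct apices are fixed, I put one apex-chord of each pentagon into $E_2$ and the other into $E_3$, and orient every chord of $E_2$ (and of $E_3$) from its smaller-$\pi$ to its larger-$\pi$ endpoint: this orientation is acyclic, and admissibility guarantees that the apex is the smaller-$\pi$ endpoint of both its chords, so each edge of $E_2$ leaves the apex of its pentagon; since the apices are distinct, every vertex has out-degree at most one in $G[E_2]$, and an acyclic digraph of maximum out-degree one is a forest. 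Hence $G[E_2]$, and symmetrically $G[E_3]$, is a plane forest, while $G[E_1]$ is $1$-plane by Property~\ref{pr:chords}. Every step — the bipolar orientation, $\pi$, the per-pentagon case analysis, the matching on this sparse instance, and the chord assignment — runs in $O(n)$ time.

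The step I expect to be the main obstacle is exactly the existence of a system of \emph{distinct} admissible apices (the Hall-condition verification): here planarity and biconnectivity of $P(G)$ must be used essentially, since the plain count $\frac{2}{3}(n-2)$ pentagons versus $n$ vertices does not by itself preclude many pentagons competing for the same few admissible apices. If a direct Hall argument proved awkward, a natural fallback is to process the vertices in increasing $\pi$-order, assigning each pentagon to its source $s_P$ unless that source is already taken and otherwise to $w_P$, and to argue, using the fan structure around each vertex, that this greedy assignment terminates with all chosen apices distinct.
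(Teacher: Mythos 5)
Your setup (st-orientation of the biconnected $P(G)$, a per-pentagon choice of an ``apex'' whose two incident chords are the ones removed from $E_1$, correctness of $G[E_1]$ via Property~\ref{pr:chords}) matches the paper's, and your observation that each pentagon has exactly two admissible apices (its face-source and one neighbour of it) is essentially right. But the proof has a genuine gap exactly where you flag it: you never establish that a system of \emph{distinct} admissible apices exists, and your whole forest argument (out-degree at most one in each of $G[E_2]$, $G[E_3]$ because each vertex is the apex of at most one pentagon) collapses without it. The Hall-condition instance is not obviously satisfiable: the auxiliary multigraph with one edge $\{s_P,w_P\}$ per pentagon is a sub-multigraph of $P(G)$ in which an edge $(v,u)$ can occur with multiplicity two (both faces flanking a ``middle'' outgoing edge of $v$ can have source $v$ and choose $w=u$), so a connected component could a priori contain two independent cycles, which is precisely the obstruction to an SDR for $2$-element sets. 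The local count you mention (the $k$ faces in the fan of a source $v$ have their apices among $k+2$ vertices) does not control larger components, and the greedy fallback by increasing $\pi$ is not guaranteed to succeed even when an SDR exists. So the ``main obstacle'' is not a technicality to be checked; it is the entire difficulty, and it is left open.

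The paper avoids this global assignment problem altogether by a different distribution of the chords. After computing the st-orientation it classifies each internal face by the lengths $i$-$j$ of its two boundary paths, selects the same apex you would (the target for types $1$-$4$/$4$-$1$, the middle vertex of the length-$2$ path for types $2$-$3$/$3$-$2$), but then puts \emph{both} chords of that face into the \emph{same} colour class ($E_2$ if $i<j$, $E_3$ if $i>j$) and orients them \emph{into} the common apex. The out-degree bound is then obtained at the other endpoints: a red chord leaves $u$ only inside the leftmost face of $u$, which is unique, so every vertex has at most one outgoing red chord regardless of how many pentagons it is the apex of; acyclicity is then shown by proving inductively that $P(G)$ together with the oriented red chords is still bipolarly oriented. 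This is a purely local argument and is what makes the $O(n)$ bound immediate. If you want to salvage your version, you either need to actually prove the SDR exists (which seems to require new structural work), or switch to the paper's same-colour, orient-into-the-apex scheme.
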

\begin{proof}
To construct the desired edge partition, we first appropriately select the edges of $E_1$ so that $E'=E \setminus E_1$ contains two adjacent chords for each filled pentagon of $G$. By Property~\ref{pr:chords}, this implies that $G[E_1]$ is a $1$-plane graph. We then color the edges of $E'$  with two colors, say green  and red, so that each monochromatic set is a plane forest. The set of green  edges will correspond to $E_2$, while the set of red edges to $E_3$. 

We aim at computing an $st$-orientation of $P(G)$. Recall that, given a biconnected plane graph and two vertices $s$ and $t$ on its outerface, it is possible to construct an $st$-orientation of the graph in linear time (see, e.g.,~\cite{DBLP:journals/tcs/EvenT76,DBLP:journals/dcg/TamassiaT86}). Since $P(G)$ is biconnected, we can compute an $st$-orientation of $P(G)$ (with $s$ and $t$ on the outerface). According to this orientation, all outgoing edges of any vertex $v\in P(G)$ appear consecutively around $v$, followed by all the incoming edges of $v$ (\cite[Lemma~2]{DBLP:journals/dcg/TamassiaT86}). For any vertex $v\in P(G)$ distinct from $s$ and $t$, this allows us to uniquely define the \emph{leftmost}  (\emph{rightmost}) face of $v$ as the face containing the last incoming and first outgoing edges (last outgoing and first incoming edges, respectively) of $v$ in clockwise order around $v$. We use this fact to classify the internal faces of $P(G)$ in different types. 
By~\cite[Lemma~1]{DBLP:journals/dcg/TamassiaT86}, each internal face $f$ of $P(G)$ has a source vertex $s(f)$ and a target vertex $t(f)$, and consists of two directed paths from $s(f)$ to $t(f)$, say $p_l(f)$ and $p_r(f)$. Since $P(G)$ is a pentangulation, we have $|p_l(f)|+|p_r(f)|=5$, $|p_l(f)| \leq 4$, and  $|p_r(f)|\leq 4$. We say that $f$ is a face of type $i-j$ if $|p_l(f)|=i$ and $|p_r(f)|=j$. Hence, in total there exist exactly four different \emph{types} of internal faces: $1-4$, $4-1$, $2-3$ and $3-2$; refer to Fig.~\ref{fig:orientation}. For each internal face of $P(G)$, we select two adjacent chords and we add them to $E'$ as follows. For the first two types of faces, we select the two chords of $f$ in $G$ that are incident to the target vertex $t(f)$. In the other two types, we select the two edges that are incident to the middle vertex of the directed path with edge-length $2$. If $i<j$ (respectively, $i>j$) we color the selected edges red (respectively, green). Note that we have not selected and colored any chord of the outerface; this selection will be made at the very end.

\begin{figure}[t]
		\centering
		\subfigure[$1-4$]{\includegraphics[width=0.15\columnwidth,page=1]{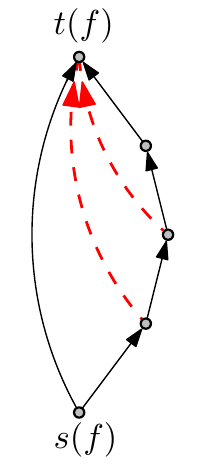}\label{fig:orientation_1}}\hfil
		\subfigure[$4-1$]{\includegraphics[width=0.15\columnwidth,page=2]{figures/orientation}\label{fig:orientation_2}}\hfil
		\subfigure[$2-3$]{\includegraphics[width=0.15\columnwidth,page=3]{figures/orientation}\label{fig:orientation_3}}\hfil
		\subfigure[$3-2$]{\includegraphics[width=0.15\columnwidth,page=4]{figures/orientation}\label{fig:orientation_4}}
		\caption{Illustration for Theorem~\ref{thm:orientation}. Red (respectively, green) edges are dashed (respectively, dotted).\label{fig:orientation}}
	\end{figure}
	
We now claim that each monochromatic subgraph induced by the red and green edges is a (simple) forest. We prove this claim for the red subgraph, symmetric arguments hold for the green one. We orient each pair of red edges of every interior face $f$ of $P(G)$ towards their common end-vertex. Observe that if $(u,v)$ is a directed red edge in a face $f$ from $u$ to $v$, then $f$ is the leftmost face of $u$. Since, the leftmost face of each vertex is unique, it follows that every vertex has at most one outgoing red edge. Hence, a cycle of red edges would be actually a directed cycle (otherwise it would contain at least one vertex with out-degree two, contradicting the previous statement). Consider the plane subgraph $G_{red}$ of $G$ containing the edges of $P(G)$ (oriented according to the $st$-orientation defined above) and the red edges (each pair oriented towards the common end-vertex). We show that $G_{red}$ does not contain directed cycles, which implies that the red subgraph is a forest. We actually prove a stronger property of $G_{red}$, namely, we show that the orientation of $G_{red}$ is an $st$-orientation. The proof is by induction on the number $i\ge 0$ of internal faces of $P(G)$ having red chords in $G_{red}$. If $i=0$, the statement trivially follows since $G_{red}$ corresponds to $P(G)$. Assume that the claim holds for $i \ge 0$, and suppose there are $i+1$ internal faces of $P(G)$ having red chords in $G_{red}$. Consider any such face $f$ of $P(G)$, and let $G'_{red}$ be the graph obtained from $G_{red}$  by removing the two red chords of $f$. Graph $G'_{red}$ is $st$-oriented by the inductive hypothesis. Obviously, reinserting the two removed chords in $G'_{red}$ creates neither new sources nor new sinks. Moreover, reinserting the two chords cannot create a directed cycle, since each reinserted chord $(u,v)$ connects either vertices on opposite paths of face $f$, or $v=t(f)$. In the first case, there cannot be a directed path in $G'_{red}$ from $v$ to $u$ by~\cite[Lemma~4]{DBLP:journals/dcg/TamassiaT86}. In the second case, observe that there is already a directed path from $u$ to $v$ in $G'_{red}$ and thus there cannot be a directed path from $v$ to $u$ because $G'_{red}$ is acyclic.  
	
It remains to select and color two chords of $G$ from the outerface of $P(G)$. For each interior face $f$ of $P(G)$, red or green edges are never incident to the source vertex $s(f)$. Hence, there is neither a red nor a green edge incident to $s$ (which is the source of the $st$-orientation). We arbitrarily select one of the two chords of $G$ in the outerface of $P(G)$ that is incident to $s$ to be red and the other one to be green. Since the degree of $s$ in the red (respectively, green) subgraph is equal to one, it follows that no cycle is created. Furthermore, the red (respectively, green) subgraph is simple, because every vertex has at most one outgoing red edge (respectively, green edge) and thus two parallel edges would form a cycle. Since an $st$-orientation can be computed in $O(n)$ time, and since $G$ has $O(n)$ faces and $O(n)$ edges, the theorem follows. Fig.~\ref{fi:intro-2} shows an edge partition computed with the described algorithm.
\end{proof}

\noindent In the following lemma, we prove that every simple optimal $2$-plane graph belongs to the family of graphs that satisfy the conditions of Lemma~\ref{le:biconnected}.

\begin{lemma}\label{le:biconnected}
The pentangulation $P(G)$ of a simple optimal $2$-plane graph $G$ is biconnected.
\end{lemma}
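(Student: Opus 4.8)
The plan is to argue by contradiction. Suppose $P(G)$ is not biconnected. First note that $P(G)$ is connected: a plane graph all of whose faces have length five cannot be disconnected, since a face incident to two distinct components is not bounded by a single closed walk of length five. Moreover $P(G)$ has at least five vertices (by Euler's formula a pentangulation needs $n\ge 5$). Hence, not being biconnected, $P(G)$ has a cut vertex $v$. The preliminaries already record one direction of the relevant fact --- in a biconnected plane graph every vertex has multiplicity one in each incident face --- but what I need here is essentially its converse: the existence of the cut vertex $v$ forces some face $f$ of $P(G)$ on whose boundary walk a vertex is encountered at least twice. This is the standard characterization that a connected plane graph on at least three vertices is biconnected if and only if every facial walk is a cycle.

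Next I would examine the facial walk $w_0w_1w_2w_3w_4$ of $f$, which has length five since $P(G)$ is a pentangulation. As $G$ (hence $P(G)$) is simple, it has no self-loop, so no two cyclically consecutive $w_i$ coincide; therefore the two occurrences of a repeated vertex sit at cyclic distance exactly two along the walk, and $f$ has at most four distinct vertices. After relabeling, $w_0=w_2=:v$. Now I would use that, since $G$ is an optimal $2$-plane graph, $f$ is a \emph{filled pentagon} (by the characterization of Bekos et al.~\cite{DBLP:conf/compgeom/Bekos0R17}): its interior contains exactly five chords, each crossing exactly two of the others, so the crossing graph restricted to these five chords is $2$-regular on five vertices and hence a $5$-cycle. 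Each chord joins two corners of $f$; a chord joining two cyclically consecutive corners $w_i,w_{i+1}$ would, together with the facial edge $w_iw_{i+1}$, bound a subregion of $f$ whose boundary contains no further corner, so it could be crossed by no chord --- contradicting that each chord has two crossings. Hence every chord joins corners at cyclic distance two, and since there are exactly five such corner-pairs while there are exactly five chords, the chords are precisely $\{w_0w_2,\,w_1w_3,\,w_2w_4,\,w_3w_0,\,w_4w_1\}$. In particular $w_0w_2$ is an edge of $G$; but $w_0=w_2=v$, so $G$ has a self-loop at $v$, contradicting simplicity. Therefore $P(G)$ is biconnected.

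I expect the main obstacle to be the middle link --- passing from "$P(G)$ has a cut vertex" to "$P(G)$ has a facial walk of length five that is not a cycle", and then pinning down that this forces two corners of the corresponding filled pentagon to coincide --- rather than the final self-loop contradiction, which is short once the chord structure of a filled pentagon is fixed. If one prefers not to invoke the general characterization of biconnectivity via facial cycles, an alternative is to treat the two failure modes directly: a bridge of $P(G)$ is traversed twice by an incident face, and a cut vertex of $P(G)$ is encountered more than once on an incident face; in either case that face has length five with a non-cyclic boundary walk, and the second paragraph applies verbatim.
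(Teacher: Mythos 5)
Your proposal is correct and follows essentially the same route as the paper: a cut vertex forces a face of $P(G)$ on whose length-$5$ boundary walk some vertex appears twice, and the filled-pentagon structure then forces a self-loop at that vertex, contradicting simplicity. The only difference is that the paper invokes the characterization of Bekos et al.\ directly (each pentagon induces a $K_5$ in $G$, so a repeated corner immediately yields a self-loop), whereas you re-derive the fact that the five chords join exactly the five distance-two corner pairs; both are fine.
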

\begin{proof}
Suppose to the contrary that there exists a cutvertex $v$ of $P(G)$. Then there exists a face $f$ of $P(G)$ such that a closed walk along the boundary of $f$ encounters $v$ at least twice, i.e., $v$ has multiplicity greater than one in $f$. Also, according to the characterization of optimal $2$-plane graphs in~\cite{DBLP:conf/compgeom/Bekos0R17}, $f$ has length $5$ and it induces a complete graph $K_5$ in $G$. Since $v$ has multiplicity greater than one in $f$, it follows that there exists a self-loop at $v$; a contradiction, since $G$ is simple.
\end{proof}

\noindent The next theorem follows directly from Lemma~\ref{le:orientation} and Lemma~\ref{le:biconnected}.

\begin{theorem}\label{thm:orientation}
Every $n$-vertex simple optimal $2$-plane graph $G=(V,E)$ has an edge partition $\langle E_1, E_2, E_3 \rangle$, which can be computed in $O(n)$ time, such that $G[E_1]$ is a $1$-plane graph and both $G[E_2]$ and $G[E_3]$ are plane forests.
\end{theorem}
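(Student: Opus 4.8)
The plan is to derive the statement as an immediate consequence of the two lemmas already in hand, so the ``proof'' here is really an assembly step rather than a fresh argument. First I would invoke Lemma~\ref{le:biconnected}: for a \emph{simple} optimal $2$-plane graph $G$, its pentangulation $P(G)$ is biconnected. This is precisely the hypothesis required to apply the stronger structural result.

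Second, I would feed $G$ into Lemma~\ref{le:orientation}. Since $P(G)$ is biconnected, that lemma yields an edge partition $\langle E_1, E_2, E_3 \rangle$ in which $G[E_1]$ is $1$-plane and $G[E_2]$, $G[E_3]$ are plane forests, and it does so in $O(n)$ time. Recovering $P(G)$ from the given drawing of $G$ (separate the crossing-free edges, which form $P(G)$, from the chords) is also an $O(n)$ operation, so the overall running time remains linear, matching the claimed bound. Putting these two steps together gives exactly the asserted edge partition together with the time bound.

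The only thing that needs checking is a bookkeeping matter: that the hypothesis ``$P(G)$ biconnected'' demanded by Lemma~\ref{le:orientation} is the very same conclusion produced by Lemma~\ref{le:biconnected}, and that the $O(n)$ bound in Lemma~\ref{le:orientation} already subsumes the cost of computing the $st$-orientation of $P(G)$; both are the case. Consequently there is no genuine obstacle in this theorem at all --- its entire content is carried by Lemma~\ref{le:orientation} (the $st$-orientation construction, the face-type classification $1$-$4$, $4$-$1$, $2$-$3$, $3$-$2$, and the red/green coloring together with the inductive $st$-orientation argument proving acyclicity of each monochromatic subgraph) and by Lemma~\ref{le:biconnected} (the observation that a cutvertex of $P(G)$ would force a self-loop in the $K_5$ induced by an incident face, contradicting simplicity, via the Bekos et al.\ characterization). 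I would therefore simply remark that the theorem follows by combining Lemma~\ref{le:orientation} and Lemma~\ref{le:biconnected}, perhaps noting that this two-lemma split is the natural one: prove the harder decomposition result for the broader class of optimal $2$-plane graphs with biconnected pentangulation, then show that simplicity places every graph of interest inside that class.
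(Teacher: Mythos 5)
Your proposal matches the paper exactly: the paper states that Theorem~\ref{thm:orientation} ``follows directly from Lemma~\ref{le:orientation} and Lemma~\ref{le:biconnected},'' which is precisely your assembly of the two lemmas. The argument is correct and complete as stated.
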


\noindent Finally, combining Theorem~\ref{thm:orientation} with a result by Ackerman~\cite{DBLP:journals/dam/Ackerman14}, stating that the edges of a $1$-plane graph can be partitioned into a plane graph and a plane forest, we obtain the following.

\begin{corollary}
Every simple optimal $2$-plane graph has an edge partition $\langle E_1$, $E_2$, $E_3$, $E_4 \rangle$ such that $G[E_1]$ is a plane graph, and  $G[E_i]$ is a plane forest,~for~$i\ge 2$.
\end{corollary}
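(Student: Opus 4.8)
The plan is simply to compose the two decomposition results already at hand. First I would apply Theorem~\ref{thm:orientation} to $G$, obtaining an edge partition $\langle F, E_3, E_4\rangle$ such that $G[F]$ is a $1$-plane graph and both $G[E_3]$ and $G[E_4]$ are plane forests, all with respect to the drawing inherited from $G$. This step pushes every troublesome crossing into the single $1$-plane subdrawing $G[F]$, leaving the rest of the edge set already split into two acyclic, crossing-free pieces.

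Second, I would invoke Ackerman's theorem~\cite{DBLP:journals/dam/Ackerman14} on the $1$-plane topological graph $G[F]$ to split $F = E_1 \cup E_2$ with $E_1 \cap E_2 = \emptyset$, where $G[E_1]$ is a plane graph and $G[E_2]$ is a plane forest. Since $F$, $E_3$, $E_4$ are pairwise disjoint and $E_1, E_2$ partition $F$, the four sets $E_1, E_2, E_3, E_4$ are pairwise disjoint and their union is $E$. Hence $\langle E_1, E_2, E_3, E_4\rangle$ is an edge partition of $G$ in which $G[E_1]$ is plane and $G[E_i]$ is a plane forest for $i \in \{2,3,4\}$, which is exactly the claimed statement.

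I do not expect any genuine obstacle here; the only point that deserves a word is that the adjectives ``plane'' and ``plane forest'' are meant relative to the drawing induced by $G$, and this is coherent because deleting edges from the optimal $2$-plane drawing of $G$ never increases the number of crossings on the surviving edges. Thus the subdrawing $G[F]$ on which Ackerman's theorem is applied is genuinely a $1$-plane drawing, and the subdrawings $G[E_i]$ produced coincide with the restrictions of the original drawing. If desired, one may also record that the whole construction runs in linear time, since the partition of Theorem~\ref{thm:orientation} is computed in $O(n)$ time and Ackerman's decomposition of a $1$-plane graph can be obtained efficiently.
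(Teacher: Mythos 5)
Your proposal is correct and is exactly the argument the paper intends: apply Theorem~\ref{thm:orientation} to split $G$ into a $1$-plane graph and two plane forests, then use Ackerman's result to further split the $1$-plane part into a plane graph and a plane forest. The paper states the corollary as an immediate combination of these two results without further detail, so your write-up matches it.
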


\subsection{Edge partitions with bounded vertex degree subgraphs}\label{sse:degree}

We now prove that the edge set of a simple optimal $2$-plane graph can be partitioned into a $1$-plane graph and a plane graph whose maximum vertex degree is bounded by a small constant. An analogous result holds for optimal $1$-plane graphs~\cite{DBLP:journals/tcs/LenhartLM17}. 
We will make use of the following technical lemma.

\begin{lemma}\label{le:pentagon}
Let $v_0,v_1,v_2,v_3,v_4$ be the (distinct) vertices of a $5$-cycle $C$ in clockwise order starting from $v_0$. Let the edges of $C$ be arbitrarily oriented.  
There exists an index $0 \le j \le 4$ such that each of the three vertices $v_j$, $v_{j+2}$, $v_{j+3}$ (indexes taken modulo $5$) is incident to at least one outgoing edge of the $5$-cycle.
\end{lemma}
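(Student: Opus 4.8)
\noindent\textbf{Proof plan.} The plan is to recast the statement in terms of \emph{sinks} of the oriented $5$-cycle. Call a vertex $v_i$ of $C$ a \emph{sink} if both cycle-edges incident to it are oriented towards $v_i$; thus $v_i$ is incident to an outgoing cycle-edge if and only if it is not a sink. Let $S \subseteq \{v_0,\dots,v_4\}$ be the set of sinks. A direct index computation shows that, for every $j$, the triple $\{v_j, v_{j+2}, v_{j+3}\}$ is exactly the complement in $\{v_0,\dots,v_4\}$ of the pair $\{v_{j+1}, v_{j+4}\}$, whose two members are non-adjacent in $C$ (they are at cyclic distance $2$); moreover, as $j$ ranges over $\{0,\dots,4\}$ these five pairs are precisely the five non-adjacent pairs of vertices of $C$. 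Hence the lemma is equivalent to the following claim: \emph{$S$ is contained in some pair of pairwise non-adjacent vertices of $C$}.

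To establish this claim, I would first bound $S$. Two adjacent vertices $v_i, v_{i+1}$ cannot both be sinks, since the cycle-edge joining them is oriented towards exactly one of its endpoints, so the other one has an outgoing cycle-edge and is not a sink. Therefore $S$ is an independent set of the $5$-cycle $C$, and since the independence number of $C$ equals $2$, we get $|S| \le 2$; in particular, if $|S| = 2$ then the two vertices of $S$ are non-adjacent.

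It then remains only to produce the required non-adjacent pair containing $S$. If $|S| = 2$ take this pair to be $S$ itself; if $|S| = 1$, say $S = \{v_i\}$, take $\{v_i, v_{i+2}\}$, which is non-adjacent and contains $S$; and if $S = \emptyset$ take any non-adjacent pair, say $\{v_0, v_2\}$. In every case $S$ is contained in a non-adjacent pair of the form $\{v_{j+1}, v_{j+4}\}$ for a suitable index $j$, so by the first step the complementary triple $\{v_j, v_{j+2}, v_{j+3}\}$ avoids $S$; equivalently, none of $v_j, v_{j+2}, v_{j+3}$ is a sink, i.e. each of them is incident to an outgoing edge of the $5$-cycle, as required. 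The whole argument is elementary; the only place that needs some care is the index bookkeeping matching the triples $\{v_j, v_{j+2}, v_{j+3}\}$ with the complements of non-adjacent pairs, and one could alternatively bypass it with a short exhaustive check over the orientation patterns of a $5$-cycle — but the sink-counting argument above seems the cleanest route.
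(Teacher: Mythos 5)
Your proof is correct, and it is essentially the same argument as the paper's: the paper also classifies vertices by whether they receive two incoming edges (your ``sinks''), observes there are at most two such vertices and that two of them cannot be adjacent, and then picks $j$ accordingly in each of the cases $k=0,1,2$. Your reformulation via complements of non-adjacent pairs is a slightly cleaner packaging of the same case analysis.
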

\begin{proof}
We distinguish a few cases based on the number $k$ of vertices of $C$ incident to two  incoming edges. Refer to Fig.~\ref{fig:5cycle} for an illustration. 
\begin{figure}[t]
\centering
\subfigure[]{\includegraphics[width=0.3\columnwidth,page=1]{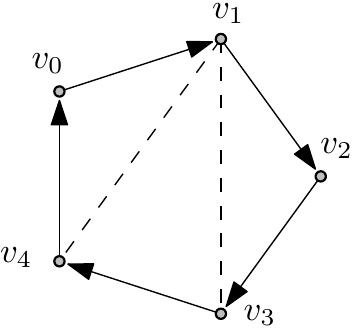}}\hfil
\subfigure[]{\includegraphics[width=0.3\columnwidth,page=2]{figures/5cycle}}\hfil
\subfigure[]{\includegraphics[width=0.3\columnwidth,page=3]{figures/5cycle}}
\caption{Illustration for Lemma~\ref{le:pentagon}. (a) $k=0$ and $j=1$; (b) $k=1$ and $j=3$; (c) $k=2$ and $j=4$.\label{fig:5cycle}}
\end{figure}
\begin{itemize}
\item If $k=0$, the claim trivially follows since any index $0 \le j \le 4$ satisfies the statement. 
\item If $k=1$, let $0 \le h \le 4$ be the index of the only vertex $v_h$ incident to two incoming edges of $C$. Then both $j=h-1$ and $j=h+1$ satisfy the statement. 
\item If $k=2$, the two vertices $v_h$ and $v_{h'}$ incident to two incoming edges of $C$ cannot be adjacent, and hence the vertex $v_j$ that is adjacent to both of them satisfies the statement.  
\end{itemize} 
Note that the case $k>2$ is not possible, as there would be an edge that is incoming with respect to both its end-vertices. This concludes our proof.
\end{proof}

\begin{theorem}\label{thm:upper_bound}
Every $n$-vertex simple optimal $2$-plane graph $G=(V,E)$ has an edge partition $\langle E_1, E_2 \rangle$, which can be computed in $O(n)$ time, such that $G[E_1]$ is a $1$-plane graph and $G[E_2]$ is a plane graph of maximum vertex degree $12$.
\end{theorem}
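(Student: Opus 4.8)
The plan is to form $E_1$ by deleting, from each filled pentagon of $G$, two \emph{adjacent} chords, i.e.\ two of its five chords that are incident to a common vertex of the pentagon (by the analysis in the proof of Property~\ref{pr:chords}, these are exactly the pairs of chords at distance two in the $5$-cycle they induce in $C(G)$). By Property~\ref{pr:chords} this alone guarantees that $G[E_1]$ is $1$-plane; and since $E_2$ then consists only of chords, which pairwise do not cross (chords of different pentagons lie in disjoint regions, and the two chords deleted from a single pentagon share an endpoint) and are pairwise distinct ($G$ is simple), the graph $G[E_2]$ is automatically plane and simple. So the only real task is to choose, for each pentagon, \emph{which} of its five vertices is the common endpoint of the two deleted chords, so that no vertex ends up incident to more than $12$ deleted chords overall.

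First I would set up the structure. By the characterization of optimal $2$-plane graphs together with Lemma~\ref{le:biconnected}, the crossing-free edges of $G$ form a biconnected pentangulation $P(G)$ with $\frac{2}{3}(n-2)$ faces, each a filled pentagon bounded by a $5$-cycle on five distinct vertices; consequently every edge of $P(G)$ lies on the boundary of exactly two filled pentagons. Since $P(G)$ is planar it has arboricity at most three (Nash-Williams), hence it admits a $3$-orientation, and I would compute such an orientation $\Lambda$ of $P(G)$ in $O(n)$ time. Crucially, $\Lambda$ is one \emph{global} orientation; restricting it yields an orientation of the boundary $5$-cycle of every filled pentagon, and on a shared edge the two incident pentagons see the same direction.

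Next, for each filled pentagon $f$ with boundary $v_0 v_1 v_2 v_3 v_4$ oriented as in $\Lambda$, I would apply Lemma~\ref{le:pentagon} to obtain an index $j=j(f)$ such that each of $v_j$, $v_{j+2}$, $v_{j+3}$ has an outgoing boundary edge of $f$, and put the two chords of $f$ incident to $v_j$ into $E_2$. Then in $f$ a vertex receives $2$ deleted chords if it is the chosen common endpoint $v_j$, receives $1$ if it is $v_{j+2}$ or $v_{j+3}$, and $0$ otherwise — and in the first two cases it has an outgoing boundary edge of $f$. To bound $\deg_{G[E_2]}(v)$ I would count incidences between the pentagons in which $v$ is incident to a deleted chord and the edges outgoing from $v$ in $\Lambda$ that bound them: each such pentagon contributes at least one incidence, while each of the at most three edges outgoing from $v$ bounds exactly two pentagons and so is counted at most twice; hence $v$ is incident to deleted chords in at most $2\cdot 3=6$ pentagons, each contributing at most $2$, giving $\deg_{G[E_2]}(v)\le 12$. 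All steps run in $O(n)$ time: $P(G)$ and $\Lambda$ are built once, and each of the $O(n)$ pentagons is handled in $O(1)$ time.

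The step I expect to be the main obstacle is exactly this degree count, and within it the observation that one must use a single global orientation of $P(G)$ of bounded out-degree rather than orienting each pentagon's boundary independently — only then can each deleted chord incident to $v$ be charged to an outgoing edge of $v$ that is shared with at most one other pentagon. Lemma~\ref{le:pentagon} is precisely the combinatorial fact that keeps the scheme consistent: for any orientation of the boundary $5$-cycle, one of the five candidate common-endpoints leaves all three affected vertices covered by an outgoing boundary edge, so the selection is always possible.
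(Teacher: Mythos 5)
Your proposal is correct and follows essentially the same route as the paper: a $3$-orientation of $P(G)$, Lemma~\ref{le:pentagon} applied to each pentagon's oriented boundary to pick the common endpoint of the two discarded adjacent chords, and the same charging of selected pentagons to outgoing edges (at most $3$ outgoing edges, each on $2$ pentagons, each pentagon contributing at most $2$ incident chords, hence degree at most $12$). The only cosmetic difference is how the $3$-orientation is obtained: the paper triangulates the pentagons and runs Schnyder's linear-time algorithm, whereas you invoke arboricity at most three via Nash-Williams; to actually get the $O(n)$ bound you would in practice fall back on the same Schnyder computation.
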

\begin{proof}
We construct the desired edge partition as follows. Remove three chords from every pentagon of $P(G)$ such that the resulting graph $G'$ is plane and all its faces have length three. Compute a $3$-orientation of $G'$ in linear time, by using the algorithm in~\cite{DBLP:conf/soda/Schnyder90}. From now on, we assume that the edges of $P(G)$ are directed according to this $3$-orientation. 
For each filled pentagon of $G$ we select three vertices that satisfy the conditions of Lemma~\ref{le:pentagon}, and we mark to be part of $E_2$ the two chords of the pentagon incident to the selected vertices. All other edges are part of $E_1$. Since each vertex has at most three outgoing edges in the $3$-orientation of $P(G)$, and each of these edges is shared by exactly two pentagons (as otherwise $G$ would be non-simple), we have that each vertex is selected for at most six pentagons and therefore is incident to at most $12$ edges~in~$E_2$. 

By Property~\ref{pr:chords}, graph $G[E_1]$ is $1$-plane. Also, graph $G[E_2]$ has maximum vertex degree 12 as shown above, and no two edges of $G[E_2]$ cross, because either they share an end-vertex or they are inside different pentagons of $P(G)$.
\end{proof}

The upper bound of Theorem~\ref{thm:upper_bound} can be improved if $P(G)$ has no separating triangles. The next lemma can be proved by using the Nash-Williams formula.

\hyphenation{arboricity}

\begin{lemma}\label{lem:arb-two}
A simple pentangulation with no separating triangles has~arboricity two.
\end{lemma}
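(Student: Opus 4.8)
The plan is to reduce the claim to the classical edge bound for triangle-free planar graphs and then invoke the Nash--Williams formula. The key preliminary observation is that in a simple pentangulation $P$ \emph{every} $3$-cycle is separating, so that the hypothesis ``$P$ has no separating triangle'' is equivalent to ``$P$ is triangle-free''. To see this, let $C$ be a $3$-cycle of $P$ and consider either of the two regions $R$ into which $C$ partitions the plane. If no vertex of $P$ lay in the interior of $R$, then no edge of $P$ could lie in the interior of $R$ either, since such an edge would join two of the three vertices of $C$ and would thus be parallel to an edge of $C$, contradicting simplicity; hence $R$ would be a single face of $P$ of length $3$, contradicting the fact that every face of a pentangulation has length $5$. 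Therefore both regions bounded by $C$ contain a vertex, i.e.\ $C$ is separating.

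Assume now that $P$ has no separating triangle; by the above, $P$ is simple, planar, and triangle-free. Since $P$ contains a $5$-cycle (any of its faces), $P$ is not a forest, so its arboricity is at least $2$. To prove that the arboricity is at most $2$, I would apply the Nash--Williams formula: it suffices to show that $m_S \le 2(n_S-1)$ for every subgraph $S$ of $P$ with $n_S \ge 2$ vertices and $m_S$ edges. Every such $S$ is again simple, planar, and triangle-free. If $n_S = 2$ then $m_S \le 1 \le 2(n_S-1)$. If $n_S \ge 3$, then the standard upper bound of $2n_S-4$ edges for triangle-free simple planar graphs on $n_S\ge 3$ vertices gives $m_S \le 2n_S - 4 < 2(n_S-1)$. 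Hence the maximum in the Nash--Williams formula equals $2$, and so does the arboricity of $P$.

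The bound $m_S \le 2n_S-4$ for $n_S \ge 3$ is itself standard; I would prove it by induction on $n_S$, first reducing to connected graphs by summing the bound over components, then deleting a vertex of degree at most two if one exists, and otherwise (minimum degree at least three) using that the plane embedding then has no face of length at most $3$ --- a length-$1$ face would be a self-loop, a length-$2$ face would force parallel edges or a bridge ending at a degree-one vertex, and a length-$3$ face would be a triangle --- so that $2m_S \ge 4f$ and Euler's formula give $m_S \le 2n_S-4$. The only genuinely delicate point in the whole argument is the exceptional value $n_S = 2$, for which $2n_S-4$ fails while the weaker inequality $2(n_S-1)$ that we actually need still holds; beyond this, the only real insight is that in a pentangulation ``no separating triangle'' is automatically the same as ``triangle-free'', and that is the step I would highlight as the main (and rather mild) obstacle.
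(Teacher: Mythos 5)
Your proof is correct and follows essentially the same route as the paper's: both arguments show that a triangle in the graph would have to be either a face of the pentangulation (impossible, since all faces are pentagons) or a separating triangle (excluded by hypothesis), deduce the bound $m_S \le 2n_S-4$ for every subgraph $S$ via Euler's formula, and conclude by Nash--Williams. Your treatment is in fact slightly more careful than the paper's, since you explicitly dispose of the exceptional case $n_S=2$ and of disconnected subgraphs, and you justify the lower bound of $2$ by exhibiting a cycle.
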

\begin{proof}
Let $P$ be a simple pentangulation with no separating triangles. Consider any subgraph $S$ of $P$ with $n_S$ vertices and $m_S$ edges. 
We claim that any face of $S$ has length at least four. Suppose, for a contradiction, that $S$ contains a face $f$ with length smaller than four. Since $P$ does not contain parallel edges, the length of $f$ cannot be two. Hence, we assume that $f$ has length three. Then either $f$ is a face of $P$ as well, or $f$ is a separating triangle of $P$. Both cases contradict our assumption.

Consequently, all faces of $S$ have length at least four, by Euler's formula and by the Handshaking Lemma we conclude that $S$ has $m_S \le \frac{4(n_S-2)}{2} \le 2n_S$ edges. On the other hand $P$ has more than $n-1$ edges. Thus, by the Nash-Williams formula, the arboricity of $P$ is two.
\end{proof}

\begin{theorem}\label{thm:upper-bound-2}
Every $n$-vertex simple optimal $2$-plane graph $G=(V,E)$ whose pentangulation $P(G)$ has no separating triangles has an edge partition $\langle E_1, E_2 \rangle$, which can be computed in $O(n^{1.5})$ time, such that $G[E_1]$ is a $1$-plane graph and $G[E_2]$ is a plane graph of maximum vertex degree $8$.
\end{theorem}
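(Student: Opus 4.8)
The plan is to follow the proof of Theorem~\ref{thm:upper_bound} almost verbatim, replacing the $3$-orientation of $P(G)$ used there (obtained from Schnyder's algorithm, which applies to triangulations) by a $2$-orientation. Such an orientation now comes for free from Lemma~\ref{lem:arb-two}: since $P(G)$ is a simple pentangulation with no separating triangles, it has arboricity two, hence its edge set decomposes into two forests; rooting every tree and orienting each edge towards the root of its tree yields an orientation of $P(G)$ in which every vertex has out-degree at most two. Moreover, such a decomposition (equivalently, such a $2$-orientation, which can be phrased as a bipartite $b$-matching / unit-capacity flow where each edge is assigned to one of its two endpoints and each vertex absorbs at most two edges) can be computed in $O(n^{1.5})$ time by standard matroid-union or network-flow techniques on the $O(n)$-edge graph $P(G)$; this is the dominant cost, as the remaining steps are linear.

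With the $2$-orientation in hand, I would proceed exactly as in Theorem~\ref{thm:upper_bound}: for each filled pentagon $f$, apply Lemma~\ref{le:pentagon} to the boundary $5$-cycle of $f$ --- a simple cycle, since $P(G)$ is biconnected by Lemma~\ref{le:biconnected} --- oriented according to the $2$-orientation. This yields an index $j$ such that $v_j$, $v_{j+2}$ and $v_{j+3}$ each have an outgoing boundary edge of $f$; I then put the two chords $v_jv_{j+2}$ and $v_jv_{j+3}$ of $f$ into $E_2$ (these are adjacent chords, as they share $v_j$), and all other edges of $G$ into $E_1$. Since $E_2$ contains two adjacent chords per filled pentagon, $G[E_1]$ is $1$-plane by Property~\ref{pr:chords}; and $G[E_2]$ is plane because two of its edges either lie in the interior of distinct faces of $P(G)$, or are the two chords of a single pentagon and share the vertex $v_j$.

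The crux is the degree bound. A vertex $v$ can be incident to an $E_2$-edge coming from a pentagon $f$ only if $v$ is one of the three vertices of $f$ selected via Lemma~\ref{le:pentagon}, which forces $v$ to have an outgoing boundary edge of $f$. As $v$ has at most two outgoing edges in the $2$-orientation of $P(G)$, and each edge of the biconnected plane graph $P(G)$ bounds exactly two faces, $v$ is selected for at most $2 \cdot 2 = 4$ pentagons. From each such pentagon, $v$ receives at most two $E_2$-edges (two if $v$ plays the role of the apex $v_j$, one if it plays the role of $v_{j+2}$ or $v_{j+3}$), so $\deg_{G[E_2]}(v) \le 8$, as required.

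The main obstacle relative to Theorem~\ref{thm:upper_bound} is exactly the first step: a degree-$2$ orientation is not obtained by the naive peeling of low-degree vertices and genuinely relies on the arboricity bound of Lemma~\ref{lem:arb-two}, and computing it is what raises the running time from linear to $O(n^{1.5})$. Everything else --- the pentagon-by-pentagon selection, the correctness of Property~\ref{pr:chords} and of the planarity of $G[E_2]$, and the use of biconnectivity of $P(G)$ so that Lemma~\ref{le:pentagon} applies and the ``two faces per edge'' count is exact --- is a routine adaptation of that proof.
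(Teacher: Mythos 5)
Your proposal is correct and follows essentially the same route as the paper: Lemma~\ref{lem:arb-two} guarantees a $2$-orientation of $P(G)$, computed in $O(n^{1.5})$ time (the paper invokes the $k$-orientation algorithm of Asahiro et al., which is the same flow-based idea you sketch), and then the pentagon-by-pentagon selection via Lemma~\ref{le:pentagon} with the two adjacent chords $v_jv_{j+2}$, $v_jv_{j+3}$ placed in $E_2$ yields the degree-$8$ bound by exactly the ``at most two outgoing edges, each on two faces'' count you give. No gaps.
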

\begin{proof}
By Lemma~\ref{lem:arb-two}, $P(G)$ has arboricity at most two, and hence admits a $2$-orientation. Asahiro et al.~\cite{DBLP:journals/ijfcs/AsahiroMOZ07} proved that, for a given $k$, a $k$-orientation (if it exists) of an (unweighted) graph with $m$ edges can be computed in $O(m^{1.5})$ time. It follows that we can use the algorithm in~\cite{DBLP:journals/ijfcs/AsahiroMOZ07} to compute a $2$-orientation of $P(G)$ in $O(n^{1.5})$ time.
We then proceed similarly as for the proof of Theorem~\ref{thm:upper_bound} in order to construct the desired edge partition. For each pentagon we select three vertices that satisfy the conditions of Lemma~\ref{le:pentagon}, and we mark the two chords of the pentagon incident to the selected vertices to be part of $E_2$. Since each vertex has at most two outgoing edges in the $2$-orientation of $P(G)$, and each of these edges is shared by exactly two pentagons, we have that each vertex is selected for at most four pentagons and therefore is incident to at most $8$~marked~edges.
\end{proof}

The next corollary is a consequence of Theorems~\ref{thm:upper_bound} and~\ref{thm:upper-bound-2}, together with the fact that every $3$-connected $1$-plane graph can be decomposed into a plane graph and a plane graph with maximum vertex degree six~\cite{algo18}.

\begin{corollary}
Every $n$-vertex simple optimal $2$-plane graph $G$ has an edge partition $\langle E_1, E_2, E_3 \rangle$, which can be computed in $O(n)$ time, such that $G[E_1]$ is plane, $G[E_2]$ is plane with maximum vertex degree $12$, and $G[E_3]$ is plane with maximum vertex degree $6$.  Also, if $P(G)$ has no separating triangles, then $G$ has an edge partition $\langle E_1, E_2, E_3 \rangle$, which can be computed in $O(n^{1.5})$ time, such that $G[E_1]$ is plane, $G[E_2]$ is plane with maximum vertex degree $8$, and $G[E_3]$ is plane with maximum vertex degree $6$. 
\end{corollary}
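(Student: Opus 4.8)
The plan is to combine Theorem~\ref{thm:upper_bound} (resp.\ Theorem~\ref{thm:upper-bound-2}) with the result of~\cite{algo18}, which states that every $3$-connected $1$-plane graph admits an edge partition into a plane graph and a plane graph of maximum vertex degree~$6$. First I would apply Theorem~\ref{thm:upper_bound} to obtain, in $O(n)$ time, an edge partition $\langle A, E_2\rangle$ of $G$ such that $G[A]$ is a $1$-plane graph and $G[E_2]$ is a plane graph of maximum vertex degree $12$; when $P(G)$ has no separating triangles I would instead invoke Theorem~\ref{thm:upper-bound-2}, obtaining in $O(n^{1.5})$ time such a partition with $G[E_2]$ of maximum vertex degree $8$. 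Recall that $A$ contains all crossing-free edges of $G$, hence all edges of $P(G)$, and that $G[A]$ is obtained from $G$ by deleting, inside every filled pentagon $v_j v_{j+1} v_{j+2} v_{j+3} v_{j+4}$, the two chords incident to a selected vertex $v_j$; thus, inside each pentagon, $G[A]$ still contains the three chords $v_{j+1}v_{j+3}$, $v_{j+2}v_{j+4}$, and $v_{j+1}v_{j+4}$.

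Next I would argue that $G[A]$ is $3$-connected. Among these three retained chords of a pentagon, the two chords $v_{j+1}v_{j+3}$ and $v_{j+1}v_{j+4}$ triangulate it (they form the fan from $v_{j+1}$). Hence $G[A]$ contains the spanning plane subgraph $G'$ obtained from $P(G)$ by adding, inside every (filled) pentagon, the two chords of the fan from the corresponding vertex $v_{j+1}$; every face of $G'$ is then a triangle, so $G'$ is a plane triangulation. Since $G$ is simple, $n>5$; moreover $G'$ is simple, because $P(G)$ is simple and, in a simple optimal $2$-plane graph, no chord can coincide with an edge of $P(G)$ or with a chord of another pentagon. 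Thus $G'$ is a simple plane triangulation on $n>5$ vertices, hence $3$-connected; consequently so is its spanning supergraph $G[A]$.

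Finally I would apply~\cite{algo18} to the $3$-connected $1$-plane graph $G[A]$, obtaining in $O(n)$ time an edge partition $\langle E_1, E_3\rangle$ of $G[A]$ such that $G[E_1]$ is plane and $G[E_3]$ is plane of maximum vertex degree $6$. Then $\langle E_1, E_2, E_3\rangle$ is the desired edge partition of $G$, and the total running time is $O(n)$ in the first case and $O(n^{1.5})$ in the second, where it is dominated by the computation of the $2$-orientation in Theorem~\ref{thm:upper-bound-2}. The only step that I expect to require real care is the $3$-connectivity of $G[A]$ established in the second paragraph; the rest is a direct composition of the cited results.
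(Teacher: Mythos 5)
Your proposal is correct and follows essentially the same route as the paper: apply Theorem~\ref{thm:upper_bound} (resp.\ Theorem~\ref{thm:upper-bound-2}) to split off $E_2$, observe that the $1$-plane remainder contains a spanning plane triangulation (the paper phrases this via the faces of length three and four of its crossing-free part, you via the fan from $v_{j+1}$ inside each pentagon — the same subgraph), conclude triconnectivity, and then invoke the decomposition of triconnected $1$-plane graphs from~\cite{algo18}. Your explicit check that the spanning triangulation is simple is a small additional care point the paper leaves implicit; otherwise the arguments coincide.
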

\begin{proof}
By Theorem~\ref{thm:upper_bound}, one can compute in $O(n)$ time an edge partition $\langle E'_1, E_2 \rangle$ such that $G[E'_1]$ is $1$-plane and $G[E_2]$ is a plane graph with maximum vertex degree $12$. Also, by Theorem~\ref{thm:upper-bound-2}, if $P(G)$ has no separating triangles, one can compute in $O(n^{1.5})$ time an edge partition $\langle E'_1, E_2 \rangle$ of $E$ such that $G[E'_1]$ is $1$-plane and $G[E_2]$ is a plane graph with maximum vertex degree $8$.

On the other hand, Di Giacomo et al.~\cite{algo18} proved that the edges of a triconnected $1$-plane graph can be partitioned into a plane graph and a plane graph with maximum vertex degree $6$ in linear time. Thus, it suffices to show that $G[E'_1]$ is triconnected. Recall that, both the algorithm of Theorem~\ref{thm:upper_bound} and the algorithm of Theorem~\ref{thm:upper-bound-2} partition the edges so that $E_2$ contains two adjacent chords from each filled pentagon of $G$. It follows that the crossing-free edges of $G[E'_1]$ form a plane graph whose faces have either length three or four, and each face of length four contains two crossing chords in its interior (see, e.g., Fig.~\ref{fi:prel-2}). Then, $G[E'_1]$ has a triangulated plane graph as a spanning subgraph, and thus it is triconnected.
\end{proof}

We conclude this section by proving a lower bound for the maximum vertex degree of an edge partition into a $1$-plane graph and a plane graph or into two plane graphs.

\begin{theorem}\label{thm:lower_bound}
There exists an infinite family $\mathcal{G}$ of simple optimal $2$-plane graphs, such that in any edge partition $\langle E_1, E_2 \rangle$ of $G\in \mathcal{G}$ where $G[E_1]$ is $1$-plane and $G[E_2]$ is plane, $G[E_1]$ has maximum vertex degree at least $12$ and $G[E_2]$ has maximum vertex degree at least $6$.
\end{theorem}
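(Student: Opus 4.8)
The plan is to recast an admissible partition as a combinatorial ``center assignment'' on the pentangulation, read off exact degree contributions, and then design $\mathcal{G}$ so that these contributions cannot be spread out.

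\smallskip\noindent\textbf{Reduction to a center assignment.} Let $\langle E_1,E_2\rangle$ be any partition with $G[E_1]$ $1$-plane and $G[E_2]$ plane. By Property~\ref{pr:chords}, $E_2$ contains at least two adjacent chords of every filled pentagon; conversely, the five chords of a filled pentagon form a $5$-cycle in its crossing graph, so any three of them contain a crossing pair, hence planarity of $G[E_2]$ forces \emph{exactly} two (adjacent) chords of each filled pentagon into $E_2$ and the other three into $E_1$. So every filled pentagon $\pi$ acquires a \emph{center} $c(\pi)\in V(\pi)$, the common endpoint of its two $E_2$-chords; and every assignment of a vertex of $\pi$ to each $\pi$, together with an arbitrary placement of the crossing-free edges of $P(G)$ into $E_1$ or $E_2$, is admissible. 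For a vertex $v$, let $d_0(v),d_1(v),d_2(v)$ count the pentagons around $v$ in which $v$ is, respectively, the center, a boundary-neighbour of the center, or at boundary-distance two from the center; then $d_0(v)+d_1(v)+d_2(v)=\deg_{P(G)}(v)$, and a short case check over the five positions of $v$ on $\pi$ gives $\deg_{G[E_2]}(v)\ge 2d_0(v)+d_2(v)$ and $\deg_{G[E_1]}(v)\ge 2d_1(v)+d_2(v)$, with equality when the crossing-free edges at $v$ all go to the opposite part. In particular $\deg_{G[E_1]}(v)+\deg_{G[E_2]}(v)=3\deg_{P(G)}(v)$, so a vertex of degree at least $6$ cannot have both $\deg_{G[E_1]}(v)\le 11$ and $\deg_{G[E_2]}(v)\le 5$. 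Thus it suffices to build, for infinitely many $n$, a simple pentangulation $P$ in which \emph{every} center assignment satisfies $\max_v(2d_1(v)+d_2(v))\ge 12$ and $\max_v(2d_0(v)+d_2(v))\ge 6$.

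\smallskip\noindent\textbf{The family.} A single high-degree vertex is not enough: the adversary can center all its pentagons at the vertex itself (killing its $E_1$-degree) or at its neighbours (killing its $E_2$-degree). I would instead let $\mathcal{G}$ consist of graphs containing a fixed \emph{core}: a high-degree vertex $v$, on whose pentagons $\pi_i=v,u_i,x_i,y_i,u_{i+1}$ both the first ring $\{u_i\}$ and the second ring $\{x_i,y_i\}$ are again high-degree and are wired in a short cyclic pattern, so that whenever a core pentagon is \emph{not} centered at one designated core vertex, its charge is forced onto another core vertex. The core is completed to a simple pentangulation (using Lemma~\ref{le:biconnected}-type checks to keep it simple), and an infinite family is obtained by attaching arbitrarily many further pentagons in a region disjoint from the core, which does not affect the arguments below.

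\smallskip\noindent\textbf{Forcing $\Delta(G[E_2])\ge 6$.} Assume a partition has $\Delta(G[E_2])\le 5$. Then $d_0(w)\le 2$ and $d_2(w)\le 5$ for every vertex $w$. At the degree-$d$ core vertex $v$ this yields $d_1(v)\ge d-7$, i.e.\ at least $d-7$ of $v$'s pentagons are centered at a neighbour of $v$; since each neighbour of $v$ is the center of at most two pentagons around $v$, at least $\lceil (d-7)/2\rceil$ neighbours must carry a center. Running the same estimate at those (high-degree) neighbours, then at their core neighbours, the ``at least one forced center here'' demands accumulate around the finitely many core vertices; choosing the core so that this cascade cannot leak out of it makes the number of pentagon slots demanded exceed the number of core pentagons — a contradiction. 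Hence $\Delta(G[E_2])\ge 6$ in every partition.

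\smallskip\noindent\textbf{Forcing $\Delta(G[E_1])\ge 12$, and the main obstacle.} The symmetric route (now $d_1$ in the role of $d_0$) shows that $\Delta(G[E_1])\le 11$ forces $d_0(w)$ to be large at every high-degree core vertex, while $\sum_w d_0(w)$ equals the total number of pentagons and a pentangulation is sparse enough (each pentagon has length five, so a set $S$ of vertices encloses fewer than $1.2\,|S|$ pentagons entirely inside it) that these demands cannot all be met inside the core. This is the hard part: the naive counting falls short of $12$, so the core must be engineered to recover the missing amount — using the crossing-free edges incident to the dense part (which, to keep $E_2$-degrees low, the adversary is driven to place in $E_1$) and the precise way the forced core pentagons interlock — and, crucially, the \emph{same} core must make both the $\ge 6$ and the $\ge 12$ bound work, even though an adversary may use different center assignments against the two bounds. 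Pinning down this core and carrying out the resulting finite case analysis on center assignments — in particular ruling out escapes that exploit the distance-two vertices $x_i,y_i$ — is where the real work lies; the reduction above and the padding for infinitude are routine.
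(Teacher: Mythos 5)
Your reduction is sound as far as it goes: combining Property~\ref{pr:chords} with the planarity of $G[E_2]$ does force exactly two adjacent chords of each filled pentagon into $E_2$ and the remaining three into $E_1$, and your ``center'' bookkeeping ($\deg_{G[E_2]}(v)\ge 2d_0(v)+d_2(v)$, $\deg_{G[E_1]}(v)\ge 2d_1(v)+d_2(v)$) is correct. But the proof stops there. The family $\mathcal{G}$ is never actually constructed, and you concede in your own words that the local cascade argument ``falls short of $12$'' and that ``pinning down this core \dots is where the real work lies.'' Since the theorem is an existence statement about a family with these forced degrees, the part you have not done is the entire theorem. Moreover, the direction you sketch --- forcing a large degree at one designated high-degree vertex by chasing where centers must go --- is exactly the approach you correctly identify as defeatable by an adversary, and there is no evidence the proposed ``core'' escapes this.

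The idea you are missing is to replace the local forcing by a \emph{global averaging} argument over a sparse set of hub vertices. The paper takes an arbitrary $n$-vertex maximal plane graph $M_n$ (so $2n-4$ triangular faces), glues into each face a fixed gadget whose internal faces are pentagons and whose outer face is that triangle, and fills every pentagon with its five chords. The gadget is arranged so that each of the pentagons touching the glued triangle has its vertices positioned so that, \emph{no matter which} two adjacent chords land in $E_2$, at least one of them has an endpoint on $M_n$ (and at least two of the three $E_1$-chords do). Summing over the $2n-4$ faces, $E_2$ has at least $3(2n-4)=6n-12$ edge--vertex incidences with $M_n$, and $E_1$ at least $6(2n-4)=12n-24$; since $M_n$ has only $n$ vertices, some vertex of $M_n$ has $E_2$-degree at least $6$ and (for a possibly different assignment) some vertex has $E_1$-degree at least $12$, for $n$ large. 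No case analysis on center assignments is needed, and the two bounds come from the same construction automatically because both are pigeonhole consequences of the same incidence counts. Your framework could in principle be completed this way --- your quantities $2d_0(v)+d_2(v)$ and $2d_1(v)+d_2(v)$ summed over $v\in V(M_n)$ are exactly what the paper counts --- but without the $M_n$-plus-gadget construction the claim is unproven.
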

\begin{proof}
\begin{figure}[th]
\centering
\subfigure[]{\includegraphics[width=0.3\columnwidth,page=4]{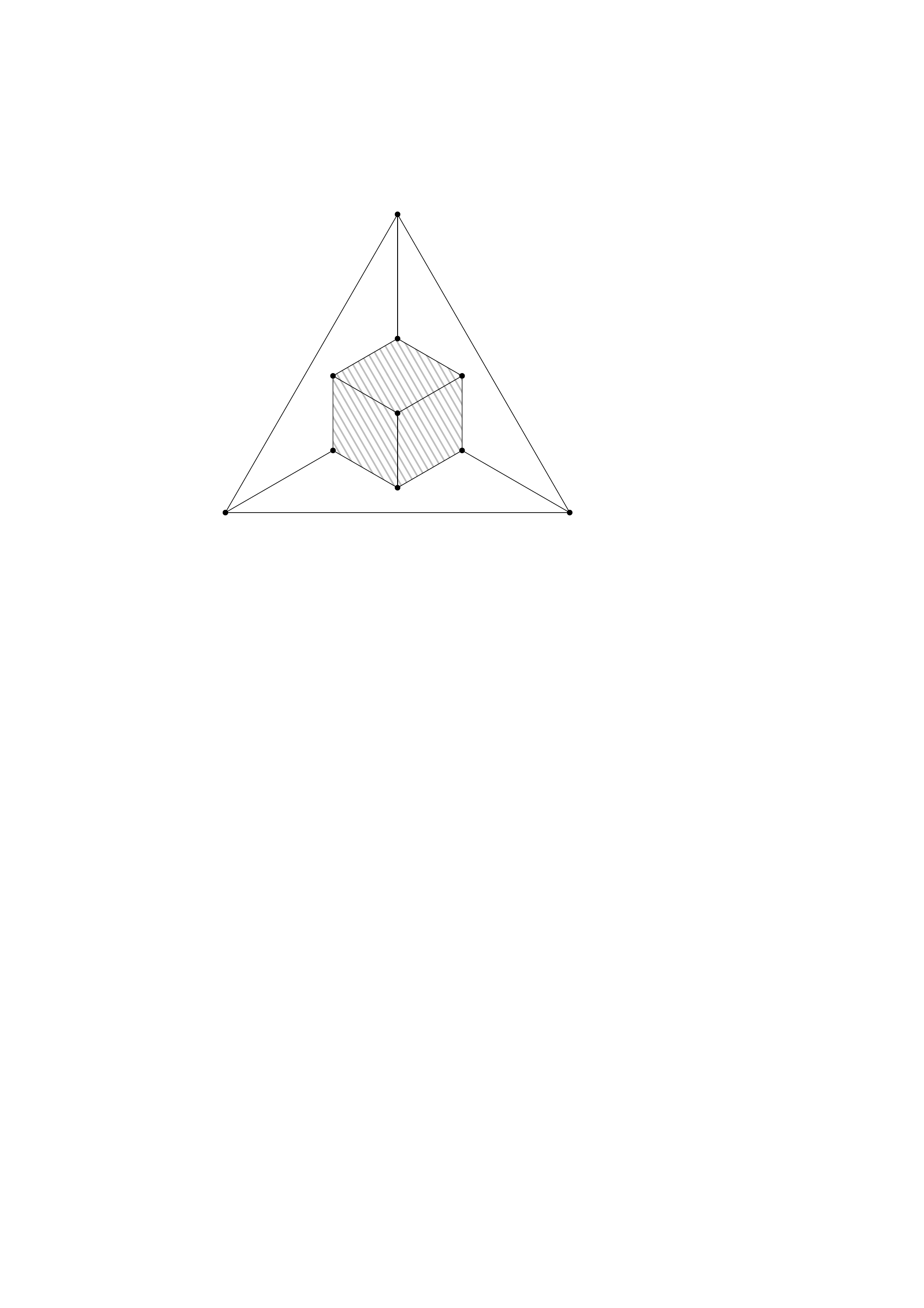}\label{fig:lower_bound_2}}\hfil
\subfigure[]{\includegraphics[width=0.3\columnwidth,page=5]{figures/pentagons}\label{fig:lower_bound_1}}\hfil
\subfigure[]{\includegraphics[width=0.3\columnwidth,page=6]{figures/pentagons}\label{fig:lower_bound_3}}
\caption{Illustrations for Theorem~\ref{thm:lower_bound}.}
\end{figure}
For every $n > 24$, we construct a graph $G_n$ with $O(n)$ vertices as described in the following. 
Consider the plane graph $G_1$ in Fig.~\ref{fig:lower_bound_2}. Note that all faces of $G_1$ have length five, except for the outer face which is a $4$-cycle. Construct the graph $G_2$ by gluing a copy of the graph $G_1$ in each of the three gray-tiled quandrangular faces of the graph in Fig.~\ref{fig:lower_bound_1}. Note that  all faces of $G_2$ have length five, except for the outer face which is a $3$-cycle. Then, starting from an $n$-vertex maximal plane graph $M_n$, identify each face of $M_n$ (including its outer face) with the outer face of a copy of $G_2$. Note that this operation is feasible, since all faces of $M_n$ are $3$-cycles by maximality. This results in a pentangulation $P_n$ with $O(n)$ vertices. $G_n$ is obtained by adding all five chords inside each pentagon of $P_n$. Graph $G_n$ is optimal $2$-plane as it satisfies the characterization in~\cite{DBLP:conf/compgeom/Bekos0R17}, and it is simple because $P_n$ is simple and triconnected. 

Consider now any edge partition $\langle E_1, E_2 \rangle$ of $G_n$, such that $G[E_1]$ is $1$-plane. Then, by Property~\ref{pr:chords}, $E_2$ contains at least two chords of each filled pentagon of $G_n$. Therefore, for each face of $M_n$, there are at least three edges of $E_2$ having one end-vertex in $M_n$ (at least one for each filled pentagon incident to the outer face of the copy of $G_2$ identified with this face). This means that $E_2$ contains at least $3(2n-4)=6n-12$ edges incident to vertices of $M_n$. Let $k$ be the maximum number of edges of $E_2$ that are incident to a single vertex of $M_n$. Then, we have $kn\geq 6n-12\Rightarrow k\geq 6$ for $n > 12$.

Similarly, since $G[E_2]$ is plane, $E_1$ contains at least three chords of each filled pentagon of $G_n$, and by the same argument used above,  we can conclude that there are at least six edges of $E_1$ having one end-vertex in $M_n$, and hence  $E_1$ contains at least $6(2n-4)=12n-24$ edges incident to vertices of $M_n$. Let $k$ be the maximum number of edges of $E_1$ that are incident to a single vertex of $M_n$, we have $kn\geq 12n-24\Rightarrow k\geq 12$ for $n > 24$.
\end{proof}

\section{Edge Partitions of Optimal $3$-plane Graphs}\label{se:3planar}

In this section, we study optimal $3$-plane graphs and we aim at showing the existence of a decomposition into a $2$-plane graph and two plane forests. It is known that no optimal $3$-plane graph is simple~\cite{DBLP:conf/compgeom/Bekos0R17}, and hence its hexangulation may also be non-simple. We show that a similar strategy as the one used in the proof of Theorem~\ref{thm:orientation} can be employed provided that the underlying hexangulation of the graph is biconnected and hence each of its faces is a simple $6$-cycle. 
Consider a filled hexagon $h$ of an optimal $3$-plane graph $G$. If the hexangulation $H(G)$ is biconnected, $h$ contains six distinct vertices, which we denote by $v_0,v_1,\dots,v_5$ following their clockwise order in a closed walk along the boundary of $h$; refer to Fig.~\ref{fig:hexagons-1}. We know  that $h$ contains $8$ chords (see Section~\ref{se:preliminaries}), and, in particular, there are only two vertices of $h$ that are not connected by an edge of $h$; we call these two vertices the \emph{poles} of $h$ (black in Fig.~\ref{fig:hexagons-1}). Let $v_i$ and $v_j$  ($0 \le i < j \le 5$) be the poles of $h$. Note that $j-i=3$, and that each chord of $h$ is crossed at most twice after removing one of the following \emph{patterns}: 

\begin{enumerate}
\item[$(\alpha)$] the two chords of $h$ incident to $v_i$ or to $v_j$; see Fig.~\ref{fig:hexagons-2},  
\item[$(\beta)$] one of the two \emph{$Z$-paths} $(v_i,v_{i+2})$, $(v_{i+2},v_{j+2})$, $(v_{j+2},v_j)$ and $(v_i,v_{j+1})$, $(v_{j+1},v_{i+1})$, $(v_{i+1},v_j)$, where indexes are taken modulo $6$; see Figs.~\ref{fig:hexagons-3}-\ref{fig:hexagons-4}, and 
\item[$(\gamma)$] any three adjacent chords of $h$; see Fig.~\ref{fig:hexagons-5}. 
\end{enumerate}

\begin{figure}[ht]
	\centering
	\subfigure[]{\includegraphics[width=0.18\columnwidth,page=1]{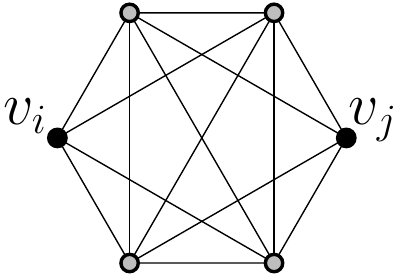}\label{fig:hexagons-1}}\hfil
	\subfigure[Pattern ($\alpha$)]{\includegraphics[width=0.18\columnwidth,page=2]{figures/hexagons}\label{fig:hexagons-2}}\hfil
	\subfigure[Pattern ($\beta$)]{\includegraphics[width=0.18\columnwidth,page=3]{figures/hexagons}\label{fig:hexagons-3}}\hfil
	\subfigure[Pattern ($\beta$)]{\includegraphics[width=0.18\columnwidth,page=4]{figures/hexagons}\label{fig:hexagons-4}}\hfil
	\subfigure[Pattern ($\gamma$)]{\includegraphics[width=0.18\columnwidth,page=5]{figures/hexagons}\label{fig:hexagons-5}}
	\caption{\label{fig:hexagons}(a) A filled hexagon (poles shown in black). (b)--(d) The three patterns. }
\end{figure}

\begin{theorem}\label{thm:orientation-3planar}
Every $n$-vertex optimal $3$-plane graph $G=(V,E)$ whose hexangulation $H(G)$ is biconnected has an edge partition $\langle E_1, E_2, E_3 \rangle$, which can be computed in $O(n)$ time, such that $G[E_1]$ is a $2$-plane graph, and both $G[E_2]$ and $G[E_3]$ are plane forests.
\end{theorem}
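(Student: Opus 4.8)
The plan is to adapt the proof of Lemma~\ref{le:orientation} from pentangulations to hexangulations, using the three patterns $(\alpha)$, $(\beta)$, $(\gamma)$ in place of the single ``two adjacent chords'' choice available inside a filled pentagon. Since $H(G)$ is biconnected, I would first compute an $st$-orientation of $H(G)$ in $O(n)$ time with $s,t$ on the outer face (see~\cite{DBLP:journals/tcs/EvenT76,DBLP:journals/dcg/TamassiaT86}); by~\cite[Lemmas~1 and~2]{DBLP:journals/dcg/TamassiaT86} every internal face $f$ consists of two directed paths $p_l(f),p_r(f)$ from its local source $s(f)$ to its local target $t(f)$, and since $f$ is a simple $6$-cycle we have $|p_l(f)|+|p_r(f)|=6$. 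This gives five face types $1$-$5$, $5$-$1$, $2$-$4$, $4$-$2$, $3$-$3$ (one more than in the pentagonal case, because $6$ is even). For each internal face $f$ I would then select exactly one of the patterns $(\alpha)$, $(\beta)$, $(\gamma)$ of the corresponding filled hexagon, move its chords to $E_2\cup E_3$, and leave all remaining edges (the hexangulation edges and the chords of the non-selected patterns) in $E_1$. By the property of the three patterns recalled above, after removing one of them every chord of the hexagon is crossed at most twice, hence $G[E_1]$ is $2$-plane regardless of the specific choices.

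The core of the argument is to $2$-color the selected chords --- green (these will form $E_2$) and red (these will form $E_3$) --- and to orient them so that each monochromatic subgraph is a simple plane forest, just as in Lemma~\ref{le:orientation}. For a face $f$ with $|p_l(f)|<|p_r(f)|$ I would color its selected chords red, and green when $|p_l(f)|>|p_r(f)|$; a face of type $3$-$3$ is the genuinely new case, which I would handle by selecting pattern $(\beta)$ --- whose three chords form a $Z$-path joining the two poles --- and splitting these chords between the two colors. In every case the pattern, and the pole (or path vertex) toward which the chords are directed, are dictated by the positions of the poles with respect to $s(f)$ and $t(f)$, so as to guarantee that (i) no selected chord is incident to $s(f)$ --- which rules out pattern $(\alpha)$ at a pole in some configurations and forces $(\beta)$ or $(\gamma)$ there --- and (ii) when a selected chord is oriented toward its designated endpoint, its tail is always a vertex interior to $p_l(f)$ or to $p_r(f)$; by the structure of the $st$-orientation this makes $f$ the leftmost face of that tail for a red chord, and its rightmost face for a green chord. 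Since leftmost and rightmost faces are unique, every vertex then has at most one outgoing red chord and at most one outgoing green chord. As in Lemma~\ref{le:orientation}, I would then prove by induction on the number of faces whose red chords have been reinserted that $H(G)$ together with all red chords (each oriented toward its designated endpoint) is still an $st$-orientation: reinserting the red chords of one face creates no new source, no new sink, and --- using~\cite[Lemma~4]{DBLP:journals/dcg/TamassiaT86} for chords whose endpoints lie on opposite boundary paths of the face, and the boundary path itself for the others --- no directed cycle. Consequently the red subgraph is acyclic; since every vertex has red out-degree at most one, any undirected cycle would be directed, so the red subgraph is a forest, and symmetrically for the green one. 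Both are plane, since two chords placed in the same class either share an endpoint, or lie in distinct hexagons, or lie on opposite sides of the same hexagon, and hence do not cross.

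As in Lemma~\ref{le:orientation}, the two chords drawn in the outer face of $H(G)$ are selected and colored only at the end: since no colored chord is incident to the local source of any internal face, the global source $s$ receives no colored chord from the interior, so the two chords of the outer filled hexagon can be distributed between red and green without creating a monochromatic cycle and while keeping both classes simple. Computing the $st$-orientation takes $O(n)$ time, and there are $O(n)$ faces and $O(n)$ chords, so the construction runs in $O(n)$ time. I expect the main obstacle to be the case analysis hidden in the selection step: one must verify that for each of the five face types, and for each of the three antipodal placements of the poles, there is a pattern whose chords can be consistently oriented with path-interior tails and none incident to $s(f)$ --- in particular, making the $3$-$3$ type work (where neither color is forced by the two boundary paths) and checking that splitting the $Z$-path of pattern $(\beta)$ between the two colors preserves the out-degree-at-most-one property on both sides, as well as that chords whose endpoints lie on the same boundary path are oriented consistently with that path so as not to close a directed cycle.
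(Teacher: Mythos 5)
Your proposal follows essentially the same route as the paper's proof: an $st$-orientation of $H(G)$ with $s$ a pole of the outer face, the five face types $1$-$5$, $5$-$1$, $2$-$4$, $4$-$2$, $3$-$3$, a pattern selected per face and its chords colored red/green, out-degree at most one per colour via the orientation toward a designated endpoint, the inductive argument that $H(G)$ plus the red chords remains $st$-oriented, and the outer face handled last. The one place where your concrete plan deviates and would break is the $3$-$3$ case: you propose to default to pattern $(\beta)$ there, but both $Z$-paths have the two poles as their endpoints, so when the poles are exactly $s(f)$ and $t(f)$ every $Z$-path puts a coloured chord at $s(f)$, violating your own requirement (i). The paper resolves precisely this sub-case by switching to pattern $(\alpha)$ at $t(f)$ and splitting its two chords between the colours (red for the one whose other endpoint lies on $p_r(f)$, green for the one on $p_l(f)$); in the remaining $3$-$3$ configurations it uses the unique $Z$-path avoiding both $s(f)$ and $t(f)$, as you suggest. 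Since you explicitly flagged this case analysis as the unresolved ``main obstacle'' rather than carrying it out, your writeup is an accurate blueprint of the paper's argument with that one verification still owed.
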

\begin{proof}
To construct the desired edge partition, we first guarantee that $E'=E \setminus E_1$ contains, for each filled hexagon of $G$, one of the three patterns described above, which implies that $G[E_1]$ is $2$-plane. We then color the edges of $E'$  with two colors, say green and red, so that each monochromatic set is a plane forest. The set of green edges will correspond to $E_2$, while the set of red edges to $E_3$. 

We compute an $st$-orientation of $H(G)$ by choosing a pole of the outerface as vertex $s$. Recall that each internal face $f$ of $H(G)$ has a source vertex $s(f)$ and a target vertex $t(f)$, and consists of two directed paths from $s(f)$ to $t(f)$, say $p_l(f)$ and $p_r(f)$. The number of edges $|p_l(f)|, |p_r(f)|$ of the two paths is at most $5$, and in particular $|p_l(f)|+|p_r(f)|=6$. We say that $f$ is a face of type $i-j$ if $|p_l(f)|=i$ and $|p_r(f)|=j$. Hence, in total there exist exactly five different \emph{types} of internal faces: $1-5$, $5-1$, $2-4$, $4-2$, $3-3$; refer to Fig.~\ref{fig:orientation-2}. For the first two types of faces we add to $E'$ the two or three chords of $f$ in $G$ that are incident to the target vertex $t(f)$ (i.e., we remove either pattern ($\alpha$) or ($\gamma$)). In the type $1-5$ (respectively, $5-1$) we color these edges red (respectively, green). For the types $2-4$ (respectively, $4-2$), we add to $E'$ the two or three chords incident to the middle vertex of $p_l(f)$ (respectively, $p_r(f)$), and we color them red (respectively, green). For the type $3-3$, we distinguish a set of cases based on the position of the poles. Suppose first that the poles are $s(f)$ and $t(f)$, then we add to $E'$ the two chords incident to $t(f)$ (pattern ($\alpha$)), and we color red (respectively, green) the chord incident to a vertex of $p_r(f)$ (respectively, $p_l(f)$); see Fig.~\ref{fig:orientation-hexa-5}. Otherwise, among the two possible $Z$-paths, there is one that does not contain neither $s(f)$ nor $t(f)$ (pattern ($\beta$)), and we remove it; see Fig.~\ref{fig:orientation-hexa-6}.

\begin{figure}[t]
	\centering
	\subfigure[]{\includegraphics[width=0.145\columnwidth,page=1]{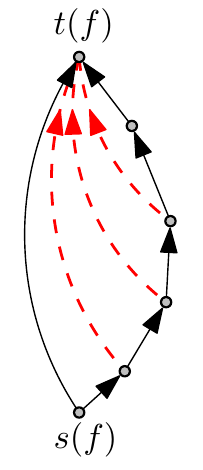}\label{fig:orientation-hexa-1}}\hfil
	\subfigure[]{\includegraphics[width=0.145\columnwidth,page=2]{figures/orientation-hexa}\label{fig:orientation-hexa-2}}\hfil
	\subfigure[]{\includegraphics[width=0.145\columnwidth,page=3]{figures/orientation-hexa}\label{fig:orientation-hexa-3}}\hfil
	\subfigure[]{\includegraphics[width=0.145\columnwidth,page=4]{figures/orientation-hexa}\label{fig:orientation-hexa-4}}\hfil
	\subfigure[]{\includegraphics[width=0.145\columnwidth,page=5]{figures/orientation-hexa}\label{fig:orientation-hexa-5}}\hfil
	\subfigure[]{\includegraphics[width=0.145\columnwidth,page=6]{figures/orientation-hexa}\label{fig:orientation-hexa-6}}
	\caption{Illustration for Theorem~\ref{thm:orientation-3planar}. Red (respectively, green) edges are dashed (respectively, dotted).\label{fig:orientation-2}}
\end{figure}

We now claim that each monochromatic subgraph induced by the red and green edges is a forest. We prove this claim for the red subgraph, symmetric arguments hold for the green one. We orient the edges such that all red (respectively, green) edges are outgoing with respect to their end-vertex belonging to $p_r(f)$ (respectively, $p_l(f)$); note that there is always such an end-vertex. This orientation implies that each vertex has at most one outgoing red edge, hence a cycle of red edges would be actually a directed cycle. Consider the plane subgraph $G_{red}$ of $G$ containing the oriented edges of $H(G)$  and the oriented red edges. Since each red edge in a face $f$ either connects a vertex of $p_r(f)$ to a vertex of $p_l(f)$, or it is incident to $t(f)$, a similar argument as the one used in the proof of Theorem~\ref{thm:orientation} shows that the orientation of $G_{red}$ is an $st$-orientation, and thus that there are no directed cycles.   
	
It remains to select and color two chords of $G$ from the outerface of $H(G)$. As in the proof of Theorem~\ref{thm:orientation}, there is neither a red nor a green edge incident to the vertex $s$ of the outerface, which is a pole by construction. We color red one of the two chords of the outerface incident to $s$, and we color green the other one (i.e., we remove pattern ($\alpha$) from the outerface). Since the degree of $s$ in the red (respectively, green) subgraph is equal to one, no cycle is created. An example is shown in Fig.~\ref{fi:intro-4}. Furthermore, the red (respectively, green) subgraph is simple, because every vertex has at most one outgoing red edge (respectively, green edge) and thus two parallel edges would form a cycle. Since an $st$-orientation can be computed in $O(n)$ time, and since $G$ has $O(n)$ faces and $O(n)$ edges, the theorem follows.
\end{proof}

\section{Discussion and open problems}\label{se:open}

We studied edge partitions of $2$-plane and $3$-plane graphs. 
We proved that simple optimal $2$-plane graphs cannot be partitioned into a $1$-plane graph and a forest, while an edge partition with a $1$-plane graph and two forests always exists. A natural question is whether the edges of a (simple) optimal $2$-plane graph can be partitioned into a plane graph and two forests. Moreover, the problem of partitioning the edges of an optimal $3$-plane graph into a $2$-plane graph and a forest is still open.

We showed that the edges of a simple optimal $2$-plane graph $G$ can be partitioned into a $1$-plane graph and a plane graph with maximum vertex degree $12$, or $8$ if the pentangulation of $G$ does not contain separating triangles.  On the other hand, there exist simple optimal $2$-plane graphs for which the plane graph of any such an edge partition has maximum vertex degree at least $6$. Reducing the gap between these two bounds on the vertex degree is an interesting problem. Also, can we improve the time complexity of Theorem~\ref{thm:upper-bound-2}?

\smallskip We conclude with a result that sheds some light on the structure of $k$-plane graphs, for any $k \ge 2$. While one can easily observe that every $k$-plane graph can be partitioned into a set of $k+1$ plane graphs, the next theorem shows a stronger property.

\begin{theorem}
Every $n$-vertex $k$-plane graph with $k \ge 2$ has an edge partition $\langle E_1,E_2 \rangle$, which can be computed in $O(k^{1.5} n)$ time, such that $G[E_1]$ is a plane graph and $G[E_2]$ is a $(k-1)$-plane graph.
\end{theorem}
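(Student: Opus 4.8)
The plan is to pass to the crossing graph $C(G)$ and peel off a maximal independent set. Since $G$ is $k$-plane, every edge of $G$ is crossed at most $k$ times, so $C(G)$ has maximum degree at most $k$. I would let $E_1$ be a maximal independent set of $C(G)$, viewed as a set of edges of $G$, and set $E_2 = E \setminus E_1$. Because no two edges of $E_1$ cross, $G[E_1]$ is plane. Because $E_1$ is a \emph{maximal} independent set, it is a dominating set of $C(G)$: every vertex of $C(G)$ not in $E_1$ has a neighbour in $E_1$. Now take any edge $e \in E_2$. In $G$ it is crossed at most $k$ times; if it is crossed at most $k-1$ times, the same holds in $G[E_2]$; and if it is crossed exactly $k$ times, then, since $e \notin E_1$, at least one of the edges crossing $e$ belongs to $E_1$ and is therefore removed, so $e$ is crossed at most $k-1$ times in $G[E_2]$. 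Hence $G[E_2]$ is $(k-1)$-plane, as required. (The same argument in fact works for $k=1$, giving a partition into two plane graphs.)

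For the running time, I would invoke the bound of Pach and Tóth~\cite{DBLP:journals/combinatorica/PachT97}: an $n$-vertex $k$-plane graph has $m = O(\sqrt{k}\,n)$ edges. Consequently the number of crossings of $G$, which equals the number of edges of $C(G)$, is at most $km/2 = O(k^{1.5} n)$, while $C(G)$ has $m = O(\sqrt{k}\,n)$ vertices. Given the drawing of $G$, equivalently its planarization, the crossing graph $C(G)$ can be constructed in time proportional to its size, i.e.\ $O(k^{1.5} n)$, by traversing, for each edge of $G$, the chain of crossings along it. A maximal independent set of $C(G)$ is then obtained by a single greedy sweep over its vertices in time linear in $|V(C(G))| + |E(C(G))| = O(k^{1.5}n)$, and $E_2$ is read off immediately. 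Summing up yields the claimed $O(k^{1.5} n)$ bound.

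The combinatorial core of the argument is light; the two points that need care are the following. First, it is precisely the \emph{maximality} of the independent set that guarantees every \emph{saturated} edge (one realising $k$ crossings) loses at least one crossing upon removing $E_1$, which is exactly what is needed to drop $G[E_2]$ from $k$-plane to $(k-1)$-plane. Second, the $O(k^{1.5} n)$ time bound relies both on the near-linear-in-$n$ Pach--Tóth edge bound for $k$-plane graphs and on having the drawing available in a form (such as its planarization) from which the $O(k^{1.5} n)$ crossings can be enumerated in time linear in their number; this second point is where I would expect the only real subtlety to lie.
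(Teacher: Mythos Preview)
Your argument is essentially identical to the paper's: both take a maximal independent set of the crossing graph $C(G)$ as the plane part, use maximality (hence domination) to drop one crossing from every remaining edge, and bound the running time via the Pach--T\'oth edge bound giving $O(k^{1.5}n)$ crossings. You provide a bit more detail on constructing $C(G)$ from the planarization, but the approach and the key observation are the same.
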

\begin{proof}
Let $G=(V,E)$ be a $k$-plane graph, with $k \ge 2$. 
We compute a maximal independent set $I$ of the crossing graph $C(G)$ of $G$. Recall that a maximal independent set is also a dominating set~\cite{berge}. 
Consider now a partition $\langle E_1, E_2 \rangle$ of $E$ such that $E_2$ contains the edges of $G$ corresponding to the vertices in $I$, while $E_1$ contains all other edges of $G$. Since the vertices of $I$ form an independent set, the edges in $E_2$ do not cross each other, and thus $G[E_2]$ is a plane graph. Also, the vertices of $I$ are a dominating set of $C(G)$ and thus $G[E_1]$ is a graph such that each edge is crossed at most $k-1$ times, i.e., a $(k-1)$-plane graph, as desired.

Since a maximal independent set can be computed in linear time in the number of edges of $C(G)$, and since a $k$-plane graph has $O(k^{1.5} n)$ crossings~\cite{DBLP:journals/combinatorica/PachT97}, the statement follows. 
\end{proof} 

\medskip\noindent\textbf{Acknowledgments.} Research started at the 2017 GNV Workshop, held in Heiligkreuztal (Germany). We wish to thank the organizers of the workshop and all the participants for the fruitful atmosphere and the useful discussions.

\bibliography{paper}

\begin{thebibliography}{10}

\bibitem{DBLP:journals/dam/Ackerman14}
E.~Ackerman.
\newblock A note on 1-planar graphs.
\newblock {\em Discrete Appl. Math.}, 175:104--108, 2014.

\bibitem{DBLP:journals/ijfcs/AsahiroMOZ07}
Y.~Asahiro, E.~Miyano, H.~Ono, and K.~Zenmyo.
\newblock Graph orientation algorithms to minimize the maximum outdegree.
\newblock {\em Int. J. Found. Comput. Sci.}, 18(2):197--215, 2007.

\bibitem{JGAA-459}
M.~A. {Bekos}, M.~{Kaufmann}, and F.~{Montecchiani}.
\newblock Guest editors' foreword and overview.
\newblock {\em J. Graph Algorithms Appl.}, 22(1):1--10, 2018.

\bibitem{DBLP:conf/compgeom/Bekos0R17}
M.~A. Bekos, M.~Kaufmann, and C.~N. Raftopoulou.
\newblock On optimal 2- and 3-planar graphs.
\newblock In {\em {SoCG} 2017}, volume~77 of {\em LIPIcs}, pages 16:1--16:16.
  Schloss Dagstuhl - Leibniz-Zentrum fuer Informatik, 2017.

\bibitem{berge}
C.~Berge.
\newblock {\em Theory of graphs and its applications}.
\newblock Methuen \& Co., 1962.

\bibitem{Chartrand197112}
G.~Chartrand, D.~Geller, and S.~Hedetniemi.
\newblock Graphs with forbidden subgraphs.
\newblock {\em J. Combin. Theory Ser. B}, 10(1):12 -- 41, 1971.

\bibitem{DBLP:journals/combinatorics/CzapH13}
J.~Czap and D.~Hud{\'{a}}k.
\newblock On drawings and decompositions of 1-planar graphs.
\newblock {\em Electr. J. Comb.}, 20(2):P54, 2013.

\bibitem{DBLP:journals/tcs/GiacomoDELMMW18}
E.~{Di Giacomo}, W.~Didimo, W.~S. Evans, G.~Liotta, H.~Meijer, F.~Montecchiani,
  and S.~K. Wismath.
\newblock New results on edge partitions of 1-plane graphs.
\newblock {\em Theor. Comput. Sci.}, 713:78--84, 2018.

\bibitem{algo18}
E.~{Di Giacomo}, W.~Didimo, W.~S. Evans, G.~Liotta, H.~Meijer, F.~Montecchiani,
  and S.~K. Wismath.
\newblock Ortho-polygon visibility representations of embedded graphs.
\newblock {\em Algorithmica}, 80(8):2345--2383, 2018.

\bibitem{DBLP:journals/corr/abs-1804-07257}
W.~Didimo, G.~Liotta, and F.~Montecchiani.
\newblock A survey on graph drawing beyond planarity.
\newblock {\em CoRR}, abs/1804.07257, 2018.

\bibitem{Ding2000221}
G.~Ding, B.~Oporowski, D.~P. Sanders, and D.~Vertigan.
\newblock Surfaces, tree-width, clique-minors, and partitions.
\newblock {\em J. Combin. Theory Ser. B}, 79(2):221 -- 246, 2000.

\bibitem{DBLP:conf/icalp/DurocherM16}
S.~Durocher and D.~Mondal.
\newblock Relating graph thickness to planar layers and bend complexity.
\newblock In {\em {ICALP} 2016}, volume~55 of {\em LIPIcs}, pages 10:1--10:13.
  Schloss Dagstuhl - Leibniz-Zentrum fuer Informatik, 2016.

\bibitem{ec-pepg+-88}
E.~S. Elmallah and C.~J. Colbourn.
\newblock Partitioning the edges of a planar graph into two partial k-trees.
\newblock In {\em Congressus Numerantium}, pages 69--80, 1988.

\bibitem{DBLP:journals/tcs/EvenT76}
S.~Even and R.~E. Tarjan.
\newblock Computing an \emph{st}-numbering.
\newblock {\em Theor. Comput. Sci.}, 2(3):339--344, 1976.

\bibitem{DBLP:conf/stoc/Goncalves05}
D.~Gon{\c{c}}alves.
\newblock Edge partition of planar graphs into two outerplanar graphs.
\newblock In {\em {STOC} 2005}, pages 504--512. {ACM}, 2005.

\bibitem{hong_et_al}
S.~Hong, M.~Kaufmann, S.~G. Kobourov, and J.~Pach.
\newblock {Beyond-Planar Graphs: Algorithmics and Combinatorics (Dagstuhl
  Seminar 16452)}.
\newblock {\em Dagstuhl Reports}, 6(11):35--62, 2017.

\bibitem{shonan}
S.~Hong and T.~Tokuyama.
\newblock {Algorithmics for Beyond Planar Graphs}.
\newblock \url{http://shonan.nii.ac.jp/shonan/blog/2015/11/15/3972/}.
\newblock {NII} Shonan Meeting, Shonan Village Center, 2016.

\bibitem{Kedlaya1996238}
K.~S. Kedlaya.
\newblock Outerplanar partitions of planar graphs.
\newblock {\em J. Combin. Theory Ser. B}, 67(2):238 -- 248, 1996.

\bibitem{DBLP:journals/csr/KobourovLM17}
S.~G. Kobourov, G.~Liotta, and F.~Montecchiani.
\newblock An annotated bibliography on 1-planarity.
\newblock {\em Computer Science Review}, 25:49--67, 2017.

\bibitem{DBLP:journals/tcs/LenhartLM17}
W.~J. Lenhart, G.~Liotta, and F.~Montecchiani.
\newblock On partitioning the edges of 1-plane graphs.
\newblock {\em Theor. Comput. Sci.}, 662:59--65, 2017.

\bibitem{JLMS:JLMS0445}
C.~S.~A. Nash-Williams.
\newblock Edge-disjoint spanning trees of finite graphs.
\newblock {\em J. London Math. Soc.}, s1-36(1):445--450, 1961.

\bibitem{DBLP:journals/combinatorica/PachT97}
J.~Pach and G.~T{\'{o}}th.
\newblock Graphs drawn with few crossings per edge.
\newblock {\em Combinatorica}, 17(3):427--439, 1997.

\bibitem{DBLP:conf/soda/Schnyder90}
W.~Schnyder.
\newblock Embedding planar graphs on the grid.
\newblock In {\em {SoDA} 1990}, pages 138--148. {SIAM}, 1990.

\bibitem{DBLP:journals/dcg/TamassiaT86}
R.~Tamassia and I.~G. Tollis.
\newblock A unified approach to visibility representations of planar graphs.
\newblock {\em Discrete Comput. Geom.}, 1:321--341, 1986.

\end{thebibliography}
\bibliographystyle{abbrv}

\end{document}